\theoremstyle{definition}
\newtheorem{definition}{Definition}[section]
\newtheorem{remark}[definition]{Remark}
\theoremstyle{plain}
\newtheorem{theorem}[definition]{Theorem}
\newtheorem{proposition}[definition]{Proposition}
\newtheorem{lemma}[definition]{Lemma}
\newtheorem{corollary}[definition]{Corollary}
\numberwithin{equation}{section}
\title[RAAGs and curve graphs of nonorientable surfaces]{Right-angled Artin groups and curve graphs of nonorientable surfaces}
\author[T.~Katayama]{Takuya Katayama}
\address{
(Takuya Katayama)
Department of Mathematics, 
Faculty of Science, 
Gakushuin University, 
1-5-1 Mejiro, Toshima-ku, Tokyo 171-8588, Japan
}
\email{katayama@math.gakushuin.ac.jp}
\author[E.~Kuno]{Erika Kuno}
\address{
(Erika Kuno)
Department of Mathematics,
Graduate School of Science, 
Osaka University,
1-1 Machikaneyama-cho Toyonaka, Osaka 560-0043, Japan
}
\email{e-kuno@math.sci.osaka-u.ac.jp}
\date{\today}
\keywords{Right-angled Artin groups; curve graphs; mapping class groups; nonorientable surfaces; two-sided curves}
\subjclass[2020]{20F36, 20F65, 20F67, 57K20}
\begin{document}

\begin{abstract}
Let $N$ be a closed nonorientable surface with or without marked points. 
In this paper we prove that, for every finite full subgraph $\Gamma$ of $\mathcal{C}^{\mathrm{two}}(N)$, the right-angled Artin group on $\Gamma$ can be embedded in the mapping class group of $N$. 
Here, $\mathcal{C}^{\mathrm{two}}(N)$ is the subgraph, induced by essential two-sided simple closed curves in $N$, of the ordinal curve graph $\mathcal{C}(N)$. 
In addition, we show that there exists a finite graph $\Gamma$ which is not a full subgraph of $\mathcal{C}^{\mathrm{two}}(N)$ for some $N$, but the right-angled Artin group on $\Gamma$ can be embedded in the mapping class group of $N$. 
\end{abstract}

\maketitle

\section{Introduction}\label{Introduction}

Let $F$ be a closed orientable or nonorientable surface with or without marked points. 
When we are concerned with the orientability and topological type of a surface, by $S_{g, m}$ and $N_{g, m}$ we denote a closed orientable surface of genus $g$ with $m$ marked points and closed nonorientable surface of genus $g$ with $m$ marked points, respectively. 
We also write $S$ and $N$ simply for a closed orientable surface and closed non-orientable surface, respectively. 
Throughout this paper a curve on $F$ means a simple closed curve on $F$. 
A curve $c$ on $F$ is {\it essential} if $c$ bounds neither a disk nor a disk with one marked point, nor a M\"obius band. 
We denote by $\mathcal{C}(F)$ the {\it curve graph} of $F$, which is the simplicial graph that the vertex set consists of the isotopy classes of all essential simple closed curves on $F$ and the edge set consists of all non-ordered pair of essential simple closed curves which can be represented disjointly. 
A curve $c$ is said to be {\it one-sided} if the regular neighborhood of $c$ is a M\"obius band. 
A curve $c$ is said to be {\it two-sided} if the regular neighborhood of $c$ is an annulus. 
For a nonorientable surface $N$, we define the {\it two-sided curve graph} $\mathcal{C}^{\mathrm{two}}(N)$ of $N$, which is the subgraph of $\mathcal{C}(F)$ induced by the isotopy classes of all two-sided curves on $N$. 
Two curves $c_{1}$ and $c_{2}$ in $N$ are in {\it minimal position} if the number of intersections of $c_{1}$ and $c_{2}$ is minimal in the isotopy classes of $c_{1}$ and $c_{2}$. 
We remark that two curves are in minimal position in $N$ if and only if they do not bound a bigon on $N$. 

For a nonorientable surface $N_{g, m}$ with negative Euler characteristic, we define the {\it two-sided complexity} of $N_{g, m}$ as
$$ \xi^{\mathrm{two}}(N)=
    \begin{cases}
        {\frac{3}{2}(g-1)+m-2 \hspace{5mm}(\mathrm{if}~g~\mathrm{is~odd})},\\
        {\frac{3}{2}g+m-3 \hspace{5mm}(\mathrm{if}~g~\mathrm{is~even})},\\      
    \end{cases}
$$
the cardinality of a maximal set of mutually disjoint two-sided simple closed curves in $N_{g, m}$. 
For nonorientable surfaces with non-negative Euler characteristic, $N_{1,0}$, $N_{1,1}$ and $N_{2,0}$, we see that $\xi^{\mathrm{two}}(N_{1,0})=0$, $\xi^{\mathrm{two}}(N_{1,1})=0$, and $\xi^{\mathrm{two}}(N_{2,0})=1$. 
We also define the {\it complexity} $\xi(N)$ of $N$ (resp.\ $\xi(S)$ of $S$) as $\xi(N)={\rm max}\{2g+m-3, 0\}$ (resp. $\xi(S)={\rm max}\{3g+m-3, 0\}$), the cardinality of a maximal set of mutually disjoint simple closed curves in $N$ (resp.\ in $S$). 
Let $\Gamma$ be a finite simplicial graph. 
We denote by ${\it V}(\Gamma)$ and $ E(\Gamma)$ the vertex set and the edge set of $\Gamma$, respectively. 
The {\it right-angled Artin group} on $\Gamma$ is defined by the following group presentation: 
\begin{align*}
A(\Gamma)=\left\langle V(\Gamma)\mid \left[v_{i}, v_{j}\right]=1 {\rm~if~and~only~if}~\{v_{i}, v_{j} \}\in E(\Gamma) \right\rangle.
\end{align*} 
For two groups $G_{1}$ and $G_{2}$, we write $G_{1} \leq G_{2}$ if there exists an injective homomorphism from $G_{1}$ to $G_{2}$. 
Similarly, for two graphs $\Lambda$ and $\Gamma$ we write $\Lambda \leq \Gamma$ if $\Lambda$ is isomorphic to a full subgraph of $\Gamma$. 
The {\it mapping class group} ${\rm Mod}(N)$ of a nonorientable surface $N$ is the set of the isotopy classes of homeomorphisms of $N$ which preserve the set of the marked points. 
The mapping class group $\mathrm{Mod}(S)$ of an orientable surface $S$ is the set of the isotopy classes of orientation-preserving homeomorphisms of $S$ which preserve the set of the marked points. 

Techniques for deciding whether a right-angled Artin group can be embedded in the mapping class group of an orientable surface have been developed during the past decade. 
Koberda~\cite{Koberda12} proved that if $\Gamma\leq \mathcal{C}(S)$, then $A(\Gamma) \leq {\rm Mod}(S)$. 
Koberda's result considerably generalized the classical fact that Dehn twists along a pair of disjoint curves generate a free abelian group of rank two and that the square of Dehn twists along a pair of curves intersecting at least once generate a free group of rank two. 
Recently, Runnels \cite{Runnels21} and Seo \cite{Seo21} independently simplified the Koberda's proof and gave nice bounds for powers of Dehn twists to generate right-angled Artin groups in the mapping class groups of orientable surfaces. 
Kim--Koberda~\cite{Kim-Koberda16} proved that for an orientable surface $S$ with $\xi(S)\geq 4$, there exists a finite graph $\Gamma$ such that $A(\Gamma)\leq {\rm Mod}(S)$ but $\Gamma\not\leq \mathcal{C}(S)$. 
Kim-Koberda proved in \cite[Theorem 5]{Kim-Koberda16} that, for a finite graph with "$n$-thick stars" and an orientable surface with complexity $n$, $A(\Gamma) \leq {\rm Mod}(S)$ implies $\Gamma \leq \mathcal{C}(S)$. 
In \cite{Katayama--Kuno18} the authors proved that, for any linear forest $\Gamma$ and orientable surface $S$, $A(\Gamma^c) \leq {\rm Mod}(S)$ implies $\Gamma^c \leq \mathcal{C}(S)$ (see Section \ref{Preliminaries} for the definition of the ``complement graph" $\Gamma^c$ of $\Gamma$). 

On the other hand, only a little is known about right-angled Artin subgroups of $\mathrm{Mod}(N)$. 
The second author~\cite{Kuno19} computed the maximal rank of the free abelian subgroups of $\mathrm{Mod}(N)$. 
Stukow~\cite{Stukow17} proved that the group generated by two Dehn twists along two-sided curves intersecting at least twice in minimal position is isomorphic to a free group of rank two in $\mathrm{Mod}(N)$. 
However, the authors do not know whether more than two Dehn twists along two-sided curves mutually intersecting at least twice in minimal position generate a free group in $\mathrm{Mod}(N)$. 

\begin{figure}
\begin{tikzpicture}[scale=0.6]
\draw[thick] (0,0) circle [x radius = 3.9cm, y radius =2.5cm];

\draw[thick, shift={(0, 0.3)}] (0,-0.35) circle [x radius=0.65cm, y radius=0.5cm];
\fill[white, shift={(0, 0.3)}] (0,0) circle [radius=0.4cm];
\draw[thick, shift={(0, 0.3)}] (0,0) circle [radius=0.4cm];
\draw[-, thick, shift={(0, 0.3)}] (-0.28, -0.28) -- (0.28, 0.28);
\draw[-, thick, shift={(0, 0.3)}] (-0.28, 0.28) -- (0.28, -0.28);

\draw[thick, shift={(-1.5, 0.3)}] (0,-0.35) circle [x radius=0.65cm, y radius=0.5cm];
\fill[white, shift={(-1.5, 0.3)}] (0,0) circle [radius=0.4cm];
\draw[thick, shift={(-1.5, 0.3)}] (0,0) circle [radius=0.4cm];
\draw[-, thick, shift={(-1.5, 0.3)}] (-0.28, -0.28) -- (0.28, 0.28);
\draw[-, thick, shift={(-1.5, 0.3)}] (-0.28, 0.28) -- (0.28, -0.28);

\draw[thick, shift={(-3.0, 0.3)}] (0,-0.35) circle [x radius=0.65cm, y radius=0.5cm];
\fill[white, shift={(-3.0, 0.3)}] (0,0) circle [radius=0.4cm];
\draw[thick, shift={(-3.0, 0.3)}] (0,0) circle [radius=0.4cm];
\draw[-, thick, shift={(-3.0, 0.3)}] (-0.28, -0.28) -- (0.28, 0.28);
\draw[-, thick, shift={(-3.0, 0.3)}] (-0.28, 0.28) -- (0.28, -0.28);

\draw[thick, shift={(1.5, 0.3)}] (0,-0.35) circle [x radius=0.65cm, y radius=0.5cm];
\fill[white, shift={(1.5, 0.3)}] (0,0) circle [radius=0.4cm];
\draw[thick, shift={(1.5, 0.3)}] (0,0) circle [radius=0.4cm];
\draw[-, thick, shift={(1.5, 0.3)}] (-0.28, -0.28) -- (0.28, 0.28);
\draw[-, thick, shift={(1.5, 0.3)}] (-0.28, 0.28) -- (0.28, -0.28);

\draw[thick, shift={(3.0, 0.3)}] (0,-0.35) circle [x radius=0.65cm, y radius=0.5cm];
\fill[white, shift={(3.0, 0.3)}] (0,0) circle [radius=0.4cm];
\draw[thick, shift={(3.0, 0.3)}] (0,0) circle [radius=0.4cm];
\draw[-, thick, shift={(3.0, 0.3)}] (-0.28, -0.28) -- (0.28, 0.28);
\draw[-, thick, shift={(3.0, 0.3)}] (-0.28, 0.28) -- (0.28, -0.28);

\draw[thick, shift={(8.5, 0)}] (18:2) -- (90:2) -- (162:2) -- (234:2) -- (306:2) --cycle;
\draw[thick, shift={(8.5, 0)}] (18:2) -- (162:2);
\draw[thick, shift={(8.5, 0)}] (234:2) -- (18:2);
\draw[-, line width=8pt, draw=white, shift={(8.5, 0)}] (162:2) to (306:2);
\draw[thick, shift={(8.5, 0)}] (162:2) -- (306:2);
\draw[-, line width=8pt, draw=white, shift={(8.5, 0)}] (90:2) to (234:2);
\draw[thick, shift={(8.5, 0)}] (90:2) -- (234:2);
\draw[-, line width=8pt, draw=white, shift={(8.5, 0)}] (306:2) to (90:2);
\draw[thick, shift={(8.5, 0)}] (306:2) -- (90:2);

\fill[shift={(8.5, 0)}] (18:2) circle [radius=7pt];
\fill[shift={(8.5, 0)}] (90:2) circle [radius=7pt];
\fill[shift={(8.5, 0)}] (162:2) circle [radius=7pt];
\fill[shift={(8.5, 0)}] (234:2)  circle [radius=7pt];
\fill[shift={(8.5, 0)}] (306:2)  circle [radius=7pt];

\end{tikzpicture}
\caption{The complete graph $K_{5}$ on five vertices satisfies $K_5 \leq \mathcal{C}(N_{5,0})$. \label{2}}
\end{figure}
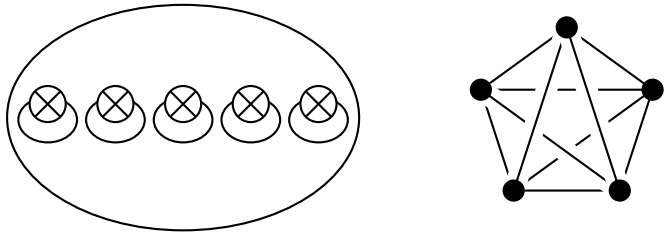

In this paper, we give combinatorial criteria for embedding right-angled Artin groups in $\mathrm{Mod}(N)$. 

\begin{theorem}\label{condition_raag_in_nonorimcg}
For every finite full subgraph $\Gamma \leq \mathcal{C}^{\mathrm{two}}(N)$, we have $A(\Gamma) \leq {\rm Mod}(N)$.
\end{theorem}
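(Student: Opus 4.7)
For each vertex $v_i \in V(\Gamma)$ let $c_i$ denote the corresponding essential two-sided simple closed curve in $N$, and let $T_i = T_{c_i} \in \mathrm{Mod}(N)$ be the Dehn twist along $c_i$, which is well-defined because $c_i$ is two-sided. The plan is to show that for a sufficiently large exponent $K$, the assignment $v_i \mapsto T_i^K$ extends to an injective homomorphism $\Phi_K \colon A(\Gamma) \to \mathrm{Mod}(N)$. The defining relations of $A(\Gamma)$ are automatically satisfied: whenever $\{v_i, v_j\} \in E(\Gamma)$, the curves $c_i, c_j$ can be realized disjointly, so $T_i^K$ and $T_j^K$ commute. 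Hence $\Phi_K$ is a well-defined homomorphism for every $K$, and the content of the theorem is the injectivity of $\Phi_K$ for $K$ large.

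For injectivity, the plan is to adapt Koberda's ping-pong argument to the nonorientable setting, using the action of $\mathrm{Mod}(N)$ on the space $\mathcal{PML}(N)$ of projective measured laminations on $N$. The required dynamical inputs are the standard twist estimates: the projective class $[c_i] \in \mathcal{PML}(N)$ is a fixed point of $T_i$; the iterates $T_i^K(\mu)$ converge to $[c_i]$ as $|K|\to\infty$ whenever $i(\mu,c_i)>0$; and the quantitative estimate $i(T_i^K(\mu),\nu) = |K|\, i(\mu,c_i)\, i(c_i,\nu) + O(1)$ holds. These facts, classical in the orientable case, carry over because Dehn twists along two-sided curves admit the usual annular local model and because the theory of measured laminations and geometric intersection numbers extends to nonorientable surfaces. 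Given these, choose small pairwise-disjoint open neighborhoods $U_i$ of $[c_i]$ in $\mathcal{PML}(N)$ and an exponent $K$ large enough that, for each $i$, the map $T_i^{\pm K}$ sends the complement of a fixed neighborhood of the set of projective classes disjoint from $c_i$ into $U_i$. Then the standard right-angled Artin ping-pong criterion, as used in the proofs of Crisp--Farb, Koberda, Runnels, and Seo, yields the injectivity of $\Phi_K$.

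The main obstacle is verifying that the dynamics of Dehn twists on $\mathcal{PML}(N)$ are as robust as in the orientable setting, in particular that the convergence and intersection-number estimates above hold uniformly enough to drive the ping-pong in the presence of one-sided curves and without a global orientation. A possible alternative route is to pass to the orientation double cover $\tilde{N}\to N$, lift each $c_i$ to the two disjoint curves $\tilde{c}_i^{\pm}$, apply Koberda's theorem to the full subgraph of $\mathcal{C}(\tilde{N})$ induced by $\{\tilde{c}_i^{\pm}\}$, and then descend via a Birman--Chillingworth-type exact sequence; but that approach requires showing that the natural homomorphism $A(\Gamma) \to A(\tilde{\Gamma})$ sending $v_i$ to the commuting product $x_i^{+}x_i^{-}$ is injective, which is itself a nontrivial algebraic statement about subgroups of right-angled Artin groups. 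The direct ping-pong approach sketched above avoids that algebraic detour at the cost of the dynamical verification.
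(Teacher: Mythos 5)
Your primary route has a genuine gap: the entire argument is conditional on the dynamical inputs on $\mathcal{PML}(N)$ --- convergence of $T_i^K(\mu)$ to $[c_i]$, the intersection-number estimate, and a uniform ping-pong set-up --- and you explicitly identify verifying these in the nonorientable setting as ``the main obstacle'' without resolving it. This is not a routine citation-filling exercise. Koberda's injectivity proof is not a short ping-pong but a delicate analysis on $\mathcal{PML}(S)$, and for a nonorientable $N$ the space $\mathcal{PML}(N)$ genuinely misbehaves relative to the orientable case (it is not a sphere of the expected dimension, one-sided curves and laminations with one-sided leaves create exceptional strata, and the closure of the set of weighted two-sided curves is a proper subset). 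So ``these facts carry over'' is precisely the claim that needs proof, and as written your argument does not contain one.

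The route you dismiss as a ``detour'' is exactly what the paper does, and the ``nontrivial algebraic statement'' you worry about --- injectivity of the diagonal map $A(\Gamma) \to A(\tilde{\Gamma})$, $v_i \mapsto x_i^{+}x_i^{-}$ --- is handled there by an elementary normal-form argument, not by anything deep. The paper isolates a combinatorial condition $(*)$ on a surjective full map $\tilde{\Gamma} \to \Gamma$ (for each pair of non-adjacent $u \neq u'$ in $\Gamma$ and each lift $v$ of $u$, some lift $v'$ of $u'$ is non-adjacent to $v$) and shows, using the Crisp--Wiest reduced-word criterion, that any such map makes the diagonal homomorphism injective and even a quasi-isometric embedding. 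The geometric input is just that minimal position is preserved under lifting to the orientation double cover (a bigon upstairs would project to a bigon downstairs), which gives condition $(*)$ for the map induced by the covering. Koberda's theorem is then applied only on the orientable cover $S$, and the image descends to $\mathrm{Mod}(N)$ because each generator maps to a $J$-invariant multitwist, so it lies in the image of $\mathrm{Mod}(N) \hookrightarrow Z^{\pm}([J])$. This completely avoids any nonorientable dynamics. If you want to salvage your write-up, the cleanest fix is to carry out this algebraic lemma rather than the $\mathcal{PML}(N)$ verification.
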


From the work \cite{Kuno19} of the second author, $A(K_5) \not\leq {\rm Mod}(N)$; nevertheless $K_5 \leq \mathcal{C}(N)$ (see Figure \ref{2}). 
This shows that the same statement as in Theorem \ref{condition_raag_in_nonorimcg} does not hold for $\mathcal{C}(N)$. 
This phenomenon, however, comes from the fact that the mapping class group of a M\"obius band is trivial. 

In general, $A(\Gamma) \leq {\rm Mod}(N)$ does not imply that $\Gamma \leq \mathcal{C}^{\mathrm{two}}(N)$. 

\begin{theorem}\label{opposite_is_not_true}
Let $N$ be a non-orientable surface homeomorphic to either one of $N_{1,6}$, $N_{3,3}$ and $N_{5,0}$. 
Then there exists a finite graph $\Gamma$ such that $A(\Gamma) \leq \mathrm{Mod}(N)$ but $\Gamma \not\leq \mathcal{C}^{\mathrm{two}}(N)$. 
\end{theorem}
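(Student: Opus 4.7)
The plan is to produce, for each $N\in\{N_{1,6}, N_{3,3}, N_{5,0}\}$, an explicit finite graph $\Gamma$ together with an embedding $A(\Gamma)\hookrightarrow\mathrm{Mod}(N)$ and an obstruction to $\Gamma\leq\mathcal{C}^{\mathrm{two}}(N)$. The common feature of the three listed surfaces is that $\xi^{\mathrm{two}}(N)=4$ in each case, so the clique number of $\mathcal{C}^{\mathrm{two}}(N)$ is exactly $4$. The strategy exploits the fact that essential subsurfaces of $N$ (and the partial pseudo-Anosov mapping classes they support) overlap in strictly more patterns than disjoint two-sided simple closed curves can realize.

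The embedding step goes as follows. For each $N$, I would choose a collection of essential subsurfaces $Y_1,\dots,Y_k\subset N$ and, on each $Y_i$, a partial pseudo-Anosov element $f_i$ (or, in degenerate cases, a Dehn twist along a suitable two-sided curve in $Y_i$). Define $\Gamma$ by declaring $\{v_i,v_j\}\in E(\Gamma)$ precisely when $Y_i$ and $Y_j$ can be isotoped to be disjoint. A Koberda-style ping-pong on the arc-and-curve complexes of the $Y_i$, parallel to the argument underlying Theorem \ref{condition_raag_in_nonorimcg}, then shows that for all sufficiently large exponents $n$ one has $\langle f_1^n,\dots,f_k^n\rangle\cong A(\Gamma)$, giving $A(\Gamma)\leq\mathrm{Mod}(N)$. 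The key gain over the two-sided curve graph setting is that subsurfaces may intersect nontrivially while still supporting elements whose supports determine the commutation graph.

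To show $\Gamma\not\leq\mathcal{C}^{\mathrm{two}}(N)$, I would run a finite case analysis in surface topology. Any putative realization of $\Gamma$ uses at most $\xi^{\mathrm{two}}(N)=4$ mutually disjoint two-sided curves, and the remaining vertices must be realized by essential two-sided curves whose intersection patterns with this maximal system are dictated by the non-edges of $\Gamma$. Classifying essential two-sided curves on $N$ by topological type (separating by each allowed partition of the genus and marked-point data, or nonseparating) and tracking how they can be added to a maximal system reduces the problem to a bounded check, controlled by the small values of $\chi(N)$ and $\xi^{\mathrm{two}}(N)$. For a carefully chosen $\Gamma$ on five vertices whose adjacency pattern reflects a subsurface incidence achievable on $N$ but incompatible with any system of five essential two-sided curves, this enumeration should rule out every full-subgraph realization.

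The main obstacle is identifying the correct $\Gamma$ so that both properties hold simultaneously: the clique-number obstruction alone is insufficient (since $A(K_5)\not\leq\mathrm{Mod}(N)$ forces $\omega(\Gamma)\leq 4$ anyway), so the failure of $\Gamma\leq\mathcal{C}^{\mathrm{two}}(N)$ must come from higher-order topological constraints on how two-sided curves can be arranged in these small surfaces. The three surfaces in the statement are presumably exactly those with $\xi^{\mathrm{two}}(N)=4$ in which such a $\Gamma$ is constructible, and I expect the three cases to require minor variations of the same blueprint rather than a uniform argument.
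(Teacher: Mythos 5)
Your high-level intuition is correct --- all three surfaces have $\xi^{\mathrm{two}}(N)=4$, and the obstruction to $\Gamma\leq\mathcal{C}^{\mathrm{two}}(N)$ must come from constraints beyond the clique number --- but as written the proposal has two genuine gaps. First, you never exhibit the graph $\Gamma$, and producing one for which \emph{both} properties can actually be verified is the entire content of the theorem. (The graph used in the paper, borrowed from Kim--Koberda, has seven vertices, not five: a $4$-cycle on $a,b,c,d$ together with three further vertices $q,g,h$; the non-embeddability is proved by classifying, in six cases, how a $4$-cycle of two-sided curves decomposes a surface with $\xi^{\mathrm{two}}=4$ into pieces $F_0,F_1,F_2$, and then showing the remaining three vertices cannot be realized as two-sided curves compatible with that decomposition.) Without a concrete candidate, neither your ping-pong step nor your ``bounded check'' can be carried out, and the existence of a suitable five-vertex $\Gamma$ is not at all obvious.

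Second, your embedding step leans on a Koberda-type theorem for powers of partial pseudo-Anosov mapping classes on a \emph{nonorientable} surface. The machinery of this paper (Lemmas~\ref{co_hom_inj} and~\ref{cov_raag_inj}, and Theorem~\ref{condition_raag_in_nonorimcg}) only transports Dehn twists along two-sided curves through the orientation double cover; the verification of condition $(*)$ rests on the bigon criterion for curves, and extending it to overlapping subsurfaces and their lifts is additional work you would have to supply. The paper sidesteps this entirely by an algebraic detour: it chooses a larger graph $\Gamma_1$ that \emph{does} embed in $\mathcal{C}^{\mathrm{two}}(N)$ for each of $N_{1,6}$, $N_{3,3}$, $N_{5,0}$ (so $A(\Gamma_1)\leq\mathrm{Mod}(N)$ by Theorem~\ref{condition_raag_in_nonorimcg}), and then embeds $A(\Gamma_0)\hookrightarrow A(\Gamma_1)$ by sending one vertex $q$ to the product $ef$ of two commuting generators (Kim--Koberda's Lemma~11). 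If you want to salvage your route, the cleanest fix is to replace your partial pseudo-Anosov supported on an overlapping subsurface by such a product of commuting Dehn twists --- a multi-twist --- which stays entirely within the paper's toolbox and realizes exactly the kind of ``support that is not a single curve'' you are after.
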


Roughly speaking, a standard embedding of a right-angled Artin group $A(\Gamma)$ in the mapping class group $\mathcal{M}$ is an injective homomorphism from $A(\Gamma)$ into $\mathcal{M}$ such that the image of each vertex is a product of mutually commutative Dehn twists and that the images of two different vertex have different ``supports" (see Definition \ref{standard_embeding}). 
For any closed orientable surface $S$, Kim--Koberda proved that from any injective homomorphism $A(\Gamma) \hookrightarrow \mathrm{Mod}(S)$ we obtain a standard embedding $A(\Gamma) \hookrightarrow \mathrm{Mod}(S)$. 
At the moment the authors do not know whether the same result hold for any closed nonorieantable surface. 
In this paper, assuming there exists a standard embedding, we prove the following. 

\begin{theorem}\label{low_genus}
Let $N$ be a nonorientable surface. 
If $A(\Gamma) \leq \mathrm{Mod}(N)$ and $\xi^{\mathrm{two}}(N) \leq 1$, then $\Gamma \leq \mathcal{C}^{\mathrm{two}}(N)$. 
If there exists a standard embedding $A(\Gamma) \hookrightarrow \mathrm{Mod}(N)$ and $\xi^{\mathrm{two}}(N) = 2$, then $\Gamma \leq \mathcal{C}^{\mathrm{two}}(N)$. 
\end{theorem}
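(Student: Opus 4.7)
The plan is to leverage the standing assumption (stated just before the theorem) that a standard embedding $\varphi \colon A(\Gamma) \hookrightarrow \mathrm{Mod}(N)$ is given. For each $v \in V(\Gamma)$, this embedding assigns a support $\mathcal{S}(v) \subseteq V(\mathcal{C}^{\mathrm{two}}(N))$---a set of mutually disjoint essential two-sided simple closed curves of cardinality at most $\xi^{\mathrm{two}}(N)$---and distinct vertices of $\Gamma$ have distinct supports. Since each vertex generator of $A(\Gamma)$ has infinite order and $\varphi$ is injective, every $\mathcal{S}(v)$ is nonempty. The structural fact I will use repeatedly is that $\varphi(v)$ and $\varphi(w)$ commute in $\mathrm{Mod}(N)$ if and only if, for every $c \in \mathcal{S}(v)$ and $d \in \mathcal{S}(w)$, either $c = d$ or $c \cap d = \emptyset$. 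The goal is to construct an injection $\Psi \colon V(\Gamma) \to V(\mathcal{C}^{\mathrm{two}}(N))$ with $\Psi(v) \in \mathcal{S}(v)$ realizing $\Gamma$ as a full subgraph of $\mathcal{C}^{\mathrm{two}}(N)$.

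For the first assertion, $|\mathcal{S}(v)| \leq 1$ combined with nonemptiness forces $|\mathcal{S}(v)| = 1$, so define $\Psi(v)$ to be the unique element of $\mathcal{S}(v)$. Injectivity is immediate from distinctness of supports, and the structural fact above yields, for distinct $v, w$, the chain $v \sim_{\Gamma} w \Longleftrightarrow \Psi(v) \cap \Psi(w) = \emptyset \Longleftrightarrow \Psi(v) \sim_{\mathcal{C}^{\mathrm{two}}(N)} \Psi(w)$, so $\Gamma \leq \mathcal{C}^{\mathrm{two}}(N)$.

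For the second assertion, $|\mathcal{S}(v)| \in \{1, 2\}$; if every support has size $1$ the argument above applies. Otherwise, fix $v_0$ with $\mathcal{S}(v_0) = \{c_1, c_2\}$, which is then a maximal disjoint two-sided system. For any neighbor $w$ of $v_0$, every curve in $\mathcal{S}(w)$ must either lie in $\{c_1, c_2\}$ or be disjoint from both; the latter is ruled out by maximality, forcing $\mathcal{S}(w) \subseteq \{c_1, c_2\}$, and distinctness of supports then gives $\mathcal{S}(w) \in \{\{c_1\}, \{c_2\}\}$. If $v_0$ possessed two such neighbors $w_1, w_2$ with $\mathcal{S}(w_i) = \{c_i\}$, they would themselves commute and form a triangle $\{v_0, w_1, w_2\}$ in $\Gamma$; but $\varphi$ would then embed $\mathbb{Z}^3 \hookrightarrow \langle T_{c_1}, T_{c_2} \rangle \cong \mathbb{Z}^2$, a contradiction. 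Hence $v_0$ has at most one neighbor, and an analogous argument shows any two size-$2$ support vertices are non-adjacent. Define $\Psi$ by sending each size-$1$ support to its unique curve; for a size-$2$ vertex $v_0$ whose unique neighbor $w_0$ (if any) has $\mathcal{S}(w_0) = \{c_i\}$, set $\Psi(v_0) = c_j$ for $\{i, j\} = \{1, 2\}$, so the $\Gamma$-edge becomes a $\mathcal{C}^{\mathrm{two}}(N)$-edge; if $v_0$ has no neighbor, choose either element of $\mathcal{S}(v_0)$.

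The hard part will be verifying the preservation of non-edges when size-$2$ supports are present. A size-$2$ vertex $v_0 = \{c_1, c_2\}$ with size-$1$ non-neighbors $u$ whose unique curve is disjoint from exactly one of $c_1, c_2$ constrains the choice of $\Psi(v_0)$, and a pair of such non-neighbors of opposite flavor would demand $\Psi(v_0) = c_1$ and $\Psi(v_0) = c_2$ simultaneously. I expect such conflicting demands to be impossible: their coexistence should either produce a triangle whose image in $\langle T_{c_1}, T_{c_2} \rangle$ again breaks the rank-$2$ cap, or yield three pairwise disjoint essential two-sided curves, violating $\xi^{\mathrm{two}}(N) = 2$. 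A finite case analysis on the intersection patterns of the supports of size-$1$ and size-$2$ non-neighbors with $\{c_1, c_2\}$---including when two size-$2$ supports share a common curve---should close the argument.
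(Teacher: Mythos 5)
Your proposal has two genuine gaps. First, the opening assertion of the theorem assumes only $A(\Gamma)\leq\mathrm{Mod}(N)$, not a standard embedding, so your support-based argument proves a weaker statement when $\xi^{\mathrm{two}}(N)\leq 1$. The paper handles that case by pure group theory: for $\xi^{\mathrm{two}}(N)=0$ the mapping class group is finite, forcing $A(\Gamma)=1$ and $\Gamma=\emptyset$; for $\xi^{\mathrm{two}}(N)=1$ either $\mathrm{Mod}(N)$ is finite or it contains no $\mathbb{Z}^2$, so $\Gamma$ is edgeless and embeds into $\mathcal{C}^{\mathrm{two}}(N)$, which is an infinite set of isolated vertices.

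Second, and more seriously, you misquote condition (ii) of Definition \ref{standard_embeding}: it requires that $\mathrm{supp}(f(u))$ is \emph{not contained in} $\mathrm{supp}(f(v))$ for distinct generators, not merely that the supports are distinct. This is precisely what makes the $\xi^{\mathrm{two}}(N)=2$ case work: since $\mathcal{C}^{\mathrm{two}}(N)$ is triangle-free, a two-component support $\{c_1,c_2\}$ is a maximal clique, so any neighbor $w$ of such a vertex $v_0$ would have $\mathrm{supp}(f(w))\subseteq\{c_1,c_2\}=\mathrm{supp}(f(v_0))$, violating (ii); hence $v_0$ is isolated (Remark \ref{remark_of_standard_embedding}), and in a connected component containing an edge every support is a single curve, after which the full embedding is immediate. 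With your weakened condition, the ``hard part'' you flag is a real obstruction rather than a formality: a vertex $v_0$ with $\mathcal{S}(v_0)=\{c_1,c_2\}$ and non-neighbors $u,u'$ carried by single curves $d,d'$ with $d\cap c_2=\emptyset$, $d\cap c_1\neq\emptyset$ and $d'\cap c_1=\emptyset$, $d'\cap c_2\neq\emptyset$ leaves no admissible value of $\Psi(v_0)$ inside $\mathcal{S}(v_0)$, and nothing in your setup rules this configuration out. The paper's escape is not a finer case analysis but a change of realization: it first reduces to connected components (via Lemma \ref{anosov_prod_nonori} and the existence of a pseudo-Anosov map), then uses the infinite diameter of $\mathcal{C}^{\mathrm{two}}(N)$ to place the realizations of distinct components far apart, so that an isolated vertex is realized by an essentially arbitrary curve intersecting everything else --- there is no reason to insist that $\Psi(v_0)$ lie in $\mathcal{S}(v_0)$.
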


\begin{theorem}\label{n-thick_stars}
Suppose $N$ is a closed nonorientable surfaces with $\xi^{\mathrm{two}}(N)=n$ and $\Gamma$ is a finite graph which has $n$-thick stars. 
If there exists a standard embedding $A(\Gamma) \hookrightarrow \mathrm{Mod}(N)$, then $\Gamma \leq \mathcal{C}^{\mathrm{two}}(N)$. 
\end{theorem}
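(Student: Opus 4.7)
The plan is to emulate Kim--Koberda's argument for \cite[Theorem 5]{Kim-Koberda16} in the nonorientable setting, with two-sided curves on $N$ playing the role of essential curves on $S$, and $\xi^{\mathrm{two}}(N)$ replacing the usual complexity.

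I would begin by unpacking the standard embedding: every vertex $v\in V(\Gamma)$ is assigned a \emph{support} $C_v$, a collection of mutually disjoint essential two-sided curves on $N$, and $v$ is sent to a product of nontrivial powers of the Dehn twists along the curves in $C_v$; by the definition of a standard embedding, distinct vertices have distinct supports. Two facts translate graph-theoretic data into geometry on $N$: Dehn twists along two-sided curves commute exactly when the curves can be made disjoint (or coincide), and sufficiently high powers of twists along two essentially intersecting two-sided curves generate a free group of rank two (Stukow \cite{Stukow17}). Consequently, $\{v,w\}\in E(\Gamma)$ forces every curve of $C_v$ to be disjoint from, or equal to, every curve of $C_w$, while $\{v,w\}\notin E(\Gamma)$ forces some pair of curves from $C_v\times C_w$ to intersect essentially.

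The heart of the proof is to show that $|C_v|=1$ for every $v\in V(\Gamma)$. Suppose for contradiction that $|C_v|\geq 2$ for some $v$. Using the $n$-thick star at $v$, I would select a family of star-neighbours of $v$ whose supports, combined with the curves in $C_v$, produce more than $n$ pairwise disjoint essential two-sided curves on $N$, contradicting $\xi^{\mathrm{two}}(N)=n$. The precise counting mirrors Kim--Koberda's, since the $n$-thick star hypothesis is designed exactly so that enough disjoint curves can be assembled from the relevant supports. I expect this to be the main obstacle: every curve produced during the reduction must remain essential and \emph{two-sided}, which uses in an essential way that the supports $C_v$ themselves consist only of essential two-sided curves (automatic from the definition of a standard embedding) and that the assembly process on $N$ does not inadvertently create one-sided curves.

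Once $|C_v|=1$ for all $v$, write $C_v=\{c_v\}$. The assignment $v\mapsto c_v$ is injective because distinct vertices have distinct supports, and by the dichotomy established above it simultaneously sends edges of $\Gamma$ to edges of $\mathcal{C}^{\mathrm{two}}(N)$ and non-edges to non-edges. This realises $\Gamma$ as a full subgraph of $\mathcal{C}^{\mathrm{two}}(N)$, as required.
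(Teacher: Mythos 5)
Your overall architecture matches the paper's: reduce to showing each support is a single curve, then send $v$ to that curve and check fullness. The reduction to $\lvert C_v\rvert=1$, however, is where your proposal has a genuine gap. The mechanism you describe --- assembling ``more than $n$ pairwise disjoint essential two-sided curves'' from $C_v$ together with supports of star-neighbours --- cannot be carried out as stated. First, the $n$-thick star at $v$ gives two cliques $K\cup\{v\}$ and $L\cup\{v\}$ meeting only at $v$, but vertices of $K$ need not be adjacent to vertices of $L$, so supports drawn from both cliques need not be pairwise disjoint; you are therefore confined to the supports of a single $n$-clique. Second, within a single $n$-clique the count can come out to exactly $n$ even when some support has two or more curves: take three disjoint two-sided curves $c_1,c_2,c_3$ and the multitwists $t_{c_1}t_{c_2}$, $t_{c_1}t_{c_3}$, $t_{c_2}t_{c_3}$ (with exponents chosen so the exponent vectors are independent). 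This is a standard embedding of $A(K_3)\cong\mathbb{Z}^3$ in which every support has size two, no support contains another, and the union of all supports has exactly $n=3$ curves. So no contradiction with $\xi^{\mathrm{two}}(N)=n$ arises from disjoint-curve counting alone; the $n$-thick star hypothesis must enter in a different way (note $K_3$ does not have $3$-thick stars, which is why it escapes the theorem).

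The paper's proof (following \cite[Theorem 5]{Kim-Koberda16}) is algebraic rather than a curve count. With $A$, $B$, $C$ the multi-curves supporting $f\langle K\rangle$, $f\langle L\rangle$, $f\langle v\rangle$, one observes that $f\langle K\cup\{v\}\rangle\cong\mathbb{Z}^n$ lies in $\langle A\cup C\rangle$, which is free abelian of rank at most $\xi^{\mathrm{two}}(N)=n$; hence $A\cup C$ is a maximal two-sided multi-curve and $f\langle K\cup\{v\}\rangle$ has finite index in $\langle A\cup C\rangle$, and similarly for $L$ and $B\cup C$. Since $\langle K\cup\{v\}\rangle\cap\langle L\cup\{v\}\rangle=\langle v\rangle$ in $A(\Gamma)$ (here $K\cap L=\emptyset$ is used), intersecting the two finite-index inclusions shows that $f\langle v\rangle\cong\mathbb{Z}$ meets $\langle C\rangle\cong\mathbb{Z}^{\lvert C\rvert}$ in a finite-index subgroup, forcing $\lvert C\rvert=1$. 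This is the step your proposal would need to import; the rest of your argument (translating edges and non-edges into disjointness and essential intersection, and injectivity of $v\mapsto c_v$ from condition (ii) of the standard embedding) is sound and agrees with the paper.
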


This paper is organized as follows. 
We first recall terminologies in graph theory and in the theory of the mapping class groups of nonorientable surfaces in Section \ref{Preliminaries}. 
In Section \ref{Proof_of_theorem_condition_raag_in_nonorimcg} we prove Theorem \ref{condition_raag_in_nonorimcg} and introduce some applications of Theorem \ref{condition_raag_in_nonorimcg}  (see Corollaries \ref{LERF} and \ref{bounded_variation}). 
Section \ref{Proof_of_theorem_opposite_is_not_true} is devoted to proving Theorem \ref{opposite_is_not_true}. 
We prove Theorems \ref{low_genus} and \ref{n-thick_stars} in Section \ref{final}. 
The appendix is attached for readers who are interested in injective homomorphisms between right-angled Coxeter groups. 

\section{Preliminaries}\label{Preliminaries}

\subsection{Graph-theoretic terminology}

A {\it graph} is a one-dimensional simplicial complex.
In particular, graphs have neither loops nor multi-edges.
For $X\subseteq V(\Gamma)$, the {\it subgraph} of $\Gamma$ {\it induced by} $X$ is the subgraph $\Lambda$ of $\Gamma$ defined by $V(\Lambda)=X$ and
\begin{align*}
E(\Lambda)=\{e\in E(\Gamma)\mid {\rm the~end~points~of}~e {\rm ~are~in~} X\}.
\end{align*}
In this case, we also say that $\Lambda$ is an {\it induced subgraph} or a {\it full subgraph} of $\Gamma$. 
Throughout this paper, a map between graphs is assumed to map vertices to vertices and edges to edges.
A map $f \colon \Lambda \rightarrow \Gamma$ is said to be {\it full} if $f$ has the following property: for every pair of adjacent vertices $u_1, u_2$ in $\Gamma$ and for any $v_1 \in f^{-1}(u_1)$ and $v_2 \in f^{-1}(u_2)$, the vertices $v_1$ and $v_2$ are adjacent in $\Lambda$. 
A graph $\Gamma$ is $\Lambda$-{\it free} if no full subgraphs of $\Gamma$ are isomorphic to $\Lambda$. 
In particular, $\Gamma$ is {\it triangle}-{\it free} if no full subgraphs of $\Gamma$ are triangles. 
The {\it link} of $v$ in $\Gamma$ is the set of the vertices in $\Gamma$ which are adjacent to $v$, and denoted as ${\rm Link}(v)$. 
The {\it star} of $v$ is the union of ${\rm Link}(v)$ and $\{v\}$, and denoted as ${\rm St}(v)$. 
A {\it clique} on $n$ vertices of a graph $\Gamma$ is a subset of $V(\Gamma)$ with cardinality $n$ which induces a complete subgraph in $\Gamma$. 
By a link, a star, or a clique, we also mean the subgraphs induced by a link, a star, or a clique, respectively. 
The {\it complement graph} $\Gamma^c$ of $\Gamma$ is the graph obtained from the complete graph $K$ on $\# V(\Gamma)$ vertices by deleting the edges corresponding to $E(\Gamma)$ so that $E(\Gamma) \sqcup E(\Gamma^c) = E(K)$. 
For a positive integer $n$, we say that $\Gamma$ has $n$-{\it thick stars} if each vertex $v$ of $\Gamma$ is contained in a pair of distinct cliques on $n$ vertices of $\Gamma$ whose intersection is exactly $\{ v \}$. 
Obviously, a graph $\Gamma$ has $n$-thick stars if and only if ${\rm Link}(v)$ contains two disjoint copies of complete graphs on $n-1$ vertices of $\Gamma$ for each vertex $v$. 
We can find many examples for graphs with $n$-thick stars in the $1$-skeleta of triangulations of $n$-dimensional hyperbolic manifolds. 

\subsection{Nonorientable surfaces}

Let $N=N_{g, m}$ be a closed nonorientable surface of genus $g$ with $m$ marked points. 
Note that $N$ is homeomorphic to the surface obtained from a sphere with $m$ marked points by removing $g$ open disks and attaching $g$ M\"{o}bius bands along their boundaries, and each of the attached M\"{o}bius bands is called a {\it crosscap} (see Figure \ref{fig_two_pattern_nonori_surface}).

\begin{figure}[h]
\includegraphics[scale=0.63]{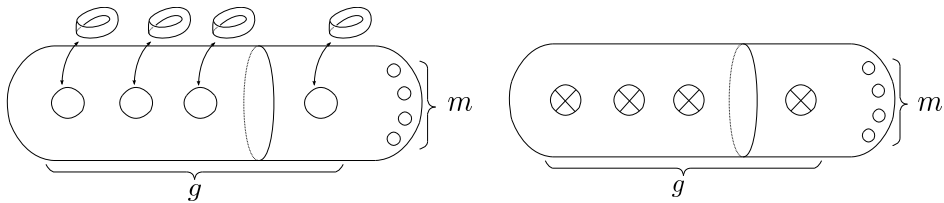}
\caption{A nonorientable surface $N_{g,m}$. \label{fig_two_pattern_nonori_surface}}
\end{figure}

For a two-sided curve $c$ on $N$, we choose an orientation of the regular neighborhood of $c$ in $N$. 
Then a {\it Dehn twist} $t_{c}$ is a homeomorphism of $N$ defined by a combination of the following operations: cutting $N$ along $c$, twisting one side to the right with respect to the orientation by $2\pi$, and reglueing (see Figure \ref{fig_dehn_twist}). 
We also denote the isotopy class of a Dehn twist along $c$ by $t_{c}$.

\begin{figure}[h]
\includegraphics[scale=0.50]{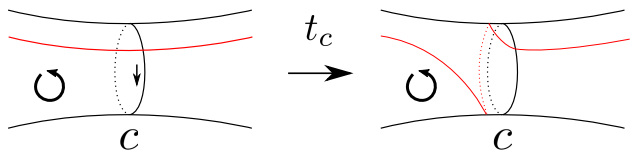}
\caption{A Dehn twist $t_{c}$ along a two-sided curve $c$ on $N_{g, m}$. \label{fig_dehn_twist}}\label{fig_dehn_twist}
\end{figure}

A {\it two-sided multi-curve} is a set of two-sided curves on $N$ which are in minimal position, mutually disjoint and non-isotopic.

\begin{figure}[h]
\includegraphics[scale=0.40]{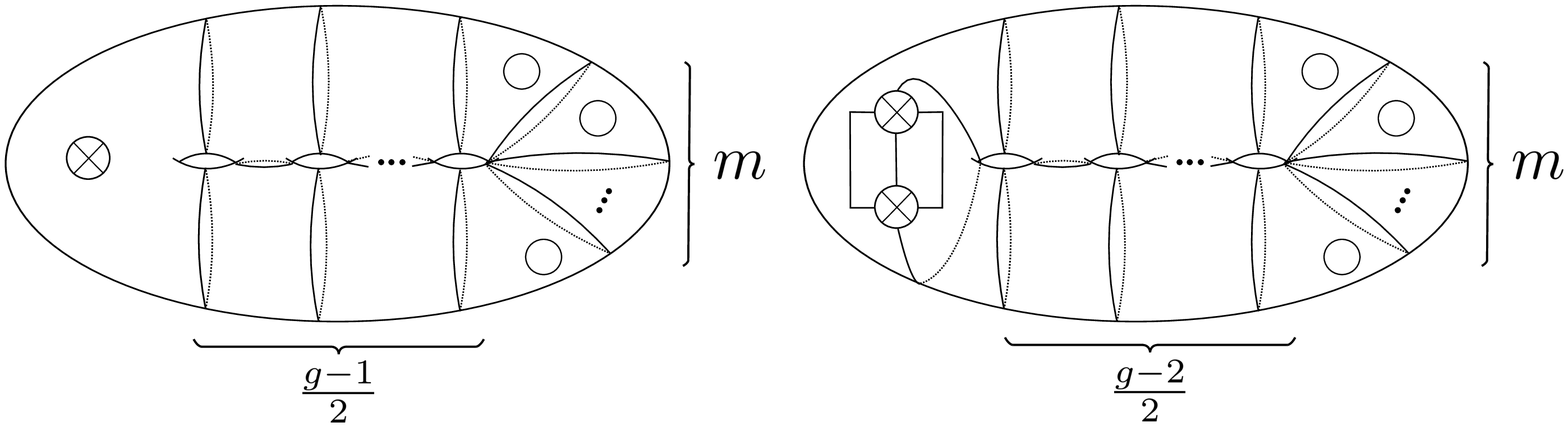}
\caption{The maximal number of pairwise disjoint two-sided curves on surfaces of odd genera (left-hand side) and surfaces of even genera (right-hand side). \label{fig_two_sided_scc}}
\end{figure}

We note that the two-sided complexity $\xi^{\mathrm{two}}(N)$ is the maximal number of the components of the two-sided multi-curves on $N$ (c.f.\ Atalan--Korkmaz~\cite{Atalan-Korkmaz14} and Kuno~\cite{Kuno19}). 
A {\it multi-twist} on $N$ is a homeomorphism of $N$ defined by the composition of right-handed and left-handed Dehn twists along components of a two-sided multi-curve. 

Let $S$ be the orientation double cover of a closed nonorientable surface $N$ with negative Euler characteristic. 
Then there exists an injective homomorphism $\iota\colon\mathrm{Mod}(N)\hookrightarrow\mathrm{Mod}(S)$. 
Precisely, we have the following. 

\begin{lemma}[\cite{Goncalves--Guaschi--Maldonado18}] \label{ori_cov_mcg_inj}
Let $N$ be a nonorientable surface with negative Euler characteristic and $S$ be the orientation double cover of $N$ with the covering transformation $J$. 
Then $\mathrm{Mod}(N) \leq Z^{\pm}([J])$ and the image consists of the mapping classes of all orientation-preserving homeomorphisms commutative with $J$. 
Here, $Z^{\pm}([J])$ is the centralizer of the mapping class $[J]$ in $\mathrm{Mod}^{\pm}(S)$ and $\mathrm{Mod}^{\pm}(S)$ is the group of homeomorphisms of $S$ preserving the set of the marked points, up to isotopy relative to the set of the marked points. 
\end{lemma}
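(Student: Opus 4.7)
The plan is to construct an explicit injection $\iota\colon\mathrm{Mod}(N)\hookrightarrow\mathrm{Mod}^{\pm}(S)$ by lifting homeomorphisms, then identify the image as the orientation-preserving part of $Z^{\pm}([J])$. Let $p\colon S\to N$ denote the orientation double cover with deck group $\langle J\rangle\cong\mathbb{Z}/2\mathbb{Z}$; note $J$ is orientation-reversing. For any homeomorphism $\varphi\colon N\to N$ preserving the marked points, covering space theory yields exactly two lifts $S\to S$, differing by post-composition with $J$. Since $J$ reverses orientation, exactly one of these lifts is orientation-preserving; call it $\tilde{\varphi}$. To check that $[\varphi]\mapsto[\tilde{\varphi}]$ descends to isotopy classes, any isotopy $H_t$ from $\varphi$ to $\varphi'$ lifts (uniquely once a starting lift is fixed) to an isotopy $\tilde{H}_t$ from $\tilde{\varphi}$ to some lift of $\varphi'$, and this endpoint must be orientation-preserving by continuity. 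The homomorphism property is immediate from the fact that the composition of two orientation-preserving lifts of $\varphi,\psi$ is an orientation-preserving lift of $\varphi\circ\psi$.

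For injectivity, I would suppose $\tilde{\varphi}$ is isotopic to $\mathrm{id}_S$ and produce an isotopy on $N$ from $\varphi$ to $\mathrm{id}_N$. The key observation is that $\tilde{\varphi}$ commutes with $J$, since $J\tilde{\varphi}J^{-1}$ is again an orientation-preserving lift of $\varphi$ (both $J$ and $J^{-1}$ reverse orientation) and therefore equals $\tilde{\varphi}$ by uniqueness. Any path in $\mathrm{Homeo}^+(S)$ from $\tilde{\varphi}$ to $\mathrm{id}_S$ can then be promoted to a $J$-equivariant path, either by uniqueness of lifts applied to the downstairs isotopy or by an averaging/equivariance argument; a $J$-equivariant isotopy descends via $p$ to an isotopy on $N$ trivializing $\varphi$.

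Finally, the image is identified as claimed: the inclusion $\iota(\mathrm{Mod}(N))\subseteq Z^{\pm}([J])$ follows from $\tilde{\varphi}J=J\tilde{\varphi}$ above, and the image clearly consists of orientation-preserving classes by construction. Conversely, given an orientation-preserving $f\colon S\to S$ with $fJ=Jf$, the equivariance lets us define $\varphi\colon N\to N$ by $\varphi\circ p=p\circ f$; then $f$ is the orientation-preserving lift of $\varphi$, so $\iota([\varphi])=[f]$. The main obstacle is the injectivity step, where one must carefully argue that an isotopy of a lift commuting with $J$ can be taken $J$-equivariant so that it projects to the base; once that technical point is cleared, everything else is a standard covering-space bookkeeping exercise, as carried out in \cite{Goncalves--Guaschi--Maldonado18}.
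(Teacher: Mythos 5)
The paper does not prove this lemma at all; it is quoted from \cite{Goncalves--Guaschi--Maldonado18}, so your proposal is being measured against the standard argument in that reference rather than against anything in the text. Most of your outline is correct and is indeed how the easy parts go: the orientation double cover is characteristic (a homeomorphism of $N$ preserves the kernel of the orientation homomorphism $\pi_1(N)\to\mathbb{Z}/2$, which is why $\varphi\circ p$ lifts at all --- worth saying explicitly), exactly one of the two lifts is orientation-preserving because $J$ reverses orientation, the assignment descends to isotopy classes by lifting isotopies, $J\tilde{\varphi}J^{-1}=\tilde{\varphi}$ by uniqueness of the orientation-preserving lift, and the identification of the image as the classes of orientation-preserving homeomorphisms that literally commute with $J$ is the bookkeeping you describe.

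The genuine gap is injectivity, and you have correctly located it but not closed it. Given an isotopy $H_t$ in $\mathrm{Homeo}^+(S)$ from $\tilde{\varphi}$ to $\mathrm{id}_S$, neither of your two suggested mechanisms produces a $J$-equivariant one. ``Uniqueness of lifts applied to the downstairs isotopy'' is circular: there is no downstairs isotopy yet --- the given $H_t$ lives on $S$ and is not assumed to be a lift of anything, so there is nothing to apply uniqueness of lifts to. And ``averaging'' has no meaning for a path of homeomorphisms: $\frac{1}{2}(H_t+J H_t J^{-1})$ is not a homeomorphism, and there is no general procedure for making an isotopy equivariant under a finite group action. The actual proofs in the literature avoid this entirely: one shows instead that if $\tilde{\varphi}\simeq\mathrm{id}_S$ then $\tilde{\varphi}_*$ is inner on $\pi_1(S)$, deduces that $\varphi_*$ is inner on $\pi_1(N)$ (using that $\pi_1(S)$ has index two and that $\chi(N)<0$ so centralizers are small), and concludes $\varphi\simeq\mathrm{id}_N$ by the Dehn--Nielsen--Baer machinery; with marked points one reduces to the closed case via compatible Birman exact sequences, which is how \cite{Goncalves--Guaschi--Maldonado18} proceeds. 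So the architecture of your proof is right, but the one step you flag as ``the main obstacle'' is exactly the step that requires a different idea from the ones you propose.
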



\section{Proof and applications of Theorem \ref{condition_raag_in_nonorimcg}}\label{Proof_of_theorem_condition_raag_in_nonorimcg}

Our proof for Theorem \ref{condition_raag_in_nonorimcg} will use a combinatorial method for embedding right-angled Artin groups into other right-angled Artin groups. 
For this purpose we recall a useful solution for the word problem of right-angled Artin groups. 
Suppose that $\Gamma$ is a graph. 
An element of $V(\Gamma) \cup V(\Gamma)^{-1}$ is called a {\it letter}. 
Any element in $A(\Gamma)$ can be expressed as a {\it word}, which is a finite multiplication of letters. 
Let $w= a_1 \cdots a_l$ be a word in $A(\Gamma)$, where $a_1, \ldots, a_l$ are letters. 
We say that $w$ is {\it reduced} if any other word representing the same element as $w$ in $A(\Gamma)$ has at least $l$ letters. 

\begin{lemma}[{cf.\ \cite[Section 5]{Crisp--Wiest04}}]
\label{reduced}
Let $w$ be a word in $A(\Gamma)$. 
Then $w$ is reduced if and only if $w$ does not contain a word of the form $v^{\varepsilon}xv^{- \varepsilon}$, where $v$ is a vertex of $\Gamma$, $\varepsilon \in \{1, -1 \}$ and $x$ is a word such that $v$ is commutative with all of the letters in $x$. 
\end{lemma}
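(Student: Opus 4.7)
The plan is to prove the two directions separately, with the nontrivial content lying entirely in the ``only if'' direction.

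For the easy (``if'') direction, I would argue directly: suppose $w$ contains a subword of the form $v^{\varepsilon} x v^{-\varepsilon}$ where every letter of $x$ commutes with $v$. Then in $A(\Gamma)$ the relation $v^{\varepsilon}xv^{-\varepsilon}=xv^{\varepsilon}v^{-\varepsilon}=x$ holds, so replacing this subword by $x$ yields a word representing the same element as $w$ but of length $|w|-2$. Hence $w$ is not reduced, proving the contrapositive.

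For the harder direction, my approach is to invoke the standard combinatorial normal form theorem for right-angled Artin groups, usually attributed to Servatius or derivable from the Hermiller--Meier / Crisp--Wiest ``pilings'' construction cited in the lemma. Specifically, I would use the following fact: two words $w, w'$ represent the same element in $A(\Gamma)$ if and only if one can be transformed into the other by a finite sequence of elementary moves, namely (i) swapping two adjacent letters $ab$ with $ba$ when the underlying vertices of $a$ and $b$ commute in $\Gamma$, and (ii) deleting or inserting an adjacent cancelling pair $v^{\varepsilon}v^{-\varepsilon}$. Assuming $w$ is not reduced, take a shorter word $w'$ representing the same element and a sequence of elementary moves from $w$ to $w'$. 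Since commutation moves preserve length, the sequence must contain at least one cancellation; look at the first such cancellation and trace the two cancelled letters $v^{\varepsilon}$ and $v^{-\varepsilon}$ back to their positions in $w$. Between these two positions in $w$, every letter must have been pushed past $v^{\varepsilon}$ (or $v^{-\varepsilon}$) by commutation moves in order for the two to become adjacent, so every such letter commutes with $v$. This exhibits the required subword $v^{\varepsilon}xv^{-\varepsilon}$ in $w$.

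The main obstacle is the shuffling/normal form theorem itself, which is the true combinatorial heart of the argument. I would either cite it directly from \cite{Crisp--Wiest04} (as suggested by the reference in the lemma's statement) or, if a self-contained proof is preferred, sketch the piling construction: associate to each word a labeled stack-diagram whose equivalence class under commutations is well-defined, prove that equivalent words produce the same piling, and then observe that reducing a piling to one with no cancellable pairs corresponds exactly to iteratively removing subwords of the form $v^{\varepsilon}xv^{-\varepsilon}$ with $v$ commuting with the letters of $x$. Once this machinery is in place, the argument above is immediate.
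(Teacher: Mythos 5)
The paper does not actually prove this lemma: it is imported verbatim from Crisp--Wiest (hence the ``cf.'' citation), so there is no internal argument to compare yours against. Judged on its own merits, your ``if'' direction is correct and complete, and your ``only if'' direction identifies the right machinery; but as written it has one genuine soft spot. You state the elementary-move theorem in the form that allows \emph{insertions} of cancelling pairs $v^{\varepsilon}v^{-\varepsilon}$, and then argue by locating the first cancellation in a move sequence from $w$ to a shorter word $w'$ and tracing the two cancelled letters back to positions in $w$. If insertions are permitted, the first cancelled pair need not consist of letters of $w$ at all --- one or both may have been inserted earlier in the sequence --- and then the trace-back produces nothing. (Example: $w=aa^{-1}$, insert $bb^{-1}$ in the middle, and cancel that inserted pair first.) The trace-back argument is airtight only for the stronger, insertion-free statement: every non-geodesic word can be shortened using commutation swaps followed by a single deletion. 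That stronger statement is precisely the nontrivial content of the Servatius/Crisp--Wiest normal form (equivalently, the piling or heap-of-pieces construction you mention), and it does not follow formally from the version with insertions.

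So the fix is simply to cite, or prove via pilings, the insertion-free reduction theorem rather than the presentation-level rewriting statement; once you have that, your argument that every letter of $w$ lying between the two traced positions must commute with $v$ (since each must be shuffled past one of the two cancelling letters) goes through and yields the subword $v^{\varepsilon}xv^{-\varepsilon}$. Your proposed fallback --- constructing the piling invariant, showing it is a complete invariant of the element, and reading off cancellable pairs from it --- is exactly the route taken in the cited Section 5 of Crisp--Wiest, so with that substitution your proof is correct and consistent with the source the paper relies on.
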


An important observation is that a surjective full map with a bit condition between graphs induces an embedding of a right-angled Artin group.  

\begin{lemma} \label{co_hom_inj}
Let $f \colon \Lambda \rightarrow \Gamma$ be a surjective full map. 
Suppose that $f$ satisfies the following condition $(*)$: for each pair of non-adjacent vertices $u \neq u'$ in $\Gamma$ and for any vertex $v$ in $f^{-1}(u)$, there exists a vertex $v'$ in $f^{-1}(u')$ such that $v$ is not adjacent to $v'$. 
Then the ``diagonal" map $\phi \colon V(\Gamma) \rightarrow A(\Lambda)$ sending each vertex $u$ to the product $\coprod_{v \in f^{-1}(u)} v$ extends to an injective group homomorphism $\varphi \colon A(\Gamma) \hookrightarrow A(\Lambda)$. 
Moreover, $\varphi$ is a quasi-isometric embedding. 
\end{lemma}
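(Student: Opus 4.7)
The plan is to use the word problem for right-angled Artin groups given in Lemma \ref{reduced}. First I fix, once and for all, a linear ordering of the preimage $f^{-1}(u)$ for each $u \in V(\Gamma)$, so that $\phi(u)$ is a specific word of length $|f^{-1}(u)|$ in $A(\Lambda)$. To extend $\phi$ to a homomorphism $\varphi \colon A(\Gamma) \to A(\Lambda)$ it suffices to check the defining commutations: whenever $\{u,u'\}\in E(\Gamma)$, fullness of $f$ forces every $v\in f^{-1}(u)$ to be adjacent in $\Lambda$ to every $v'\in f^{-1}(u')$, so the words $\phi(u)$ and $\phi(u')$ commute letter-by-letter in $A(\Lambda)$.

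For injectivity, given a reduced word $w=u_1^{\varepsilon_1}\cdots u_l^{\varepsilon_l}$ in $A(\Gamma)$, form $\hat w:=\phi(u_1)^{\varepsilon_1}\cdots\phi(u_l)^{\varepsilon_l}$ and view it as a word in the letters of $A(\Lambda)$. The goal is to show $\hat w$ is reduced, which by Lemma \ref{reduced} forces $\varphi(w)\neq 1$. Note first that inside a single ``block'' $\phi(u_i)^{\varepsilon_i}$ all letters are distinct and carry the common sign $\varepsilon_i$, so no cancellation can occur within one block. Suppose for contradiction that $\hat w$ is not reduced: Lemma \ref{reduced} yields positions $p<q$ in $\hat w$ carrying $v^{\delta}$ and $v^{-\delta}$ with every letter strictly between commuting with $v$ in $\Lambda$; I choose such a pair with $q-p$ minimal.

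The heart of the argument is to analyze this minimal cancelling pair. Minimality rules out any further occurrence of $v^{\pm}$ strictly between $p$ and $q$, since either a $v^{\delta}$ at position $r\in(p,q)$ would let me shorten to $(r,q)$, or a $v^{-\delta}$ at such $r$ would let me shorten to $(p,r)$. Say $v^{\delta}$ lies in block $i$ and $v^{-\delta}$ in block $j$; then $u_i=u_j=:u$ (both preimages contain $v$), one has $i<j$ since the two signs cannot coexist in one block, and $\varepsilon_i=\delta$, $\varepsilon_j=-\delta$. For each intermediate index $k$ with $i<k<j$, the entire block $\phi(u_k)^{\varepsilon_k}$ sits strictly between $p$ and $q$, so every vertex of $f^{-1}(u_k)$ commutes with $v$ in $\Lambda$, and moreover $u_k\neq u$ (else $v$ itself would appear in block $k$, violating minimality). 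This is where condition $(*)$ intervenes: if $u_k$ were non-adjacent to $u$ in $\Gamma$, then $(*)$ would produce a $v'\in f^{-1}(u_k)$ non-adjacent to $v$ in $\Lambda$, contradicting that $v'$ commutes with $v$. Hence every intermediate $u_k$ is adjacent to $u$ in $\Gamma$, so $w$ contains the subword $u^{\delta}(u_{i+1}^{\varepsilon_{i+1}}\cdots u_{j-1}^{\varepsilon_{j-1}})u^{-\delta}$ whose middle commutes with $u$; by Lemma \ref{reduced} this contradicts the assumption that $w$ is reduced.

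The quasi-isometric claim then follows immediately. Setting $C:=\max_{u\in V(\Gamma)}|f^{-1}(u)|$, each letter of the reduced word $w$ contributes between $1$ and $C$ letters to the now-reduced word $\hat w$, so $|w|_{A(\Gamma)}\leq|\varphi(w)|_{A(\Lambda)}\leq C\cdot|w|_{A(\Gamma)}$, which yields the required bi-Lipschitz bounds on the Cayley-graph metrics. The main obstacle, and precisely the place where condition $(*)$ is essential, is the passage from a putative cancellation in $\hat w$ back to a cancellation in $w$: $(*)$ is exactly the hypothesis needed to ensure that non-adjacency in $\Gamma$ is witnessed by non-commutativity in $\Lambda$ across every intermediate block.
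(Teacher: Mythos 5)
Your proof is correct and follows essentially the same strategy as the paper's: extend $\phi$ using fullness, then pull a cancellation in $\varphi(w)$ back to a cancellation in $w$ via the disjointness of the fibers of $f$ together with condition $(*)$, and read off the bi-Lipschitz constant $\max_{u}\# f^{-1}(u)$ from the fact that reduced words go to reduced words. Your analysis of a minimal cancelling pair (ruling out intermediate occurrences of $v$ and noting that each intermediate block lies entirely inside the commuting word) makes explicit some details the paper leaves implicit, but the underlying argument is the same.
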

\begin{proof}[Proof of Lemma \ref{co_hom_inj}]
Note that the correspondence $\phi$ is a map because $f$ is surjective. 
We first prove that $\phi$ extends to a group homomorphism $\varphi$. 
Pick vertices $u_1$ and $u_2$ in $\Gamma$ with the relation $[u_1, u_2]=1$. 
Since $f$ is full, every vertex in $f^{-1}(u_1)$ is commutative with all vertices in $f^{-1}(u_2)$. 
This implies that $\phi(u_1)$ and $\phi(u_2)$ are commutative. 
Hence, $\phi$ extends to a group homomorphism $\varphi$. 

We next prove that $\varphi$ is injective. 
Pick an element $g$ of $\mathrm{Ker}\varphi$ and choose a reduced representation $w$ of $g$. 
Suppose, on the contrary, that $w$ is not empty. 
Since $\varphi$ is a group homomorphism, we can regard $\varphi(w)$ as a word representation of $\varphi(g)$. 
Since $w$ is not empty, $\varphi(w)$ is not empty either. 
However, since a reduced representation of $\varphi(g)$ is empty, according to Lemma \ref{reduced} a cancelation must occur in $\varphi(w)$ and the cancelation is of the form $v^{\varepsilon} x v^{- \varepsilon}$ such that $v$ is a vertex of $\Lambda$ and $x$ is a word in $A(\Lambda)$ consisting of vertices commutative with $v$, and $\varepsilon \in \{ 1, -1 \}$. 
Since $f$ is a map, we have $f^{-1}(u_1) \cap f^{-1}(u_2) = \emptyset$ for every pair of distinct vertices $u_1$ and $u_2$ in $\Gamma$. 
Hence, the prefix and suffix of $v^{\varepsilon} x v^{- \varepsilon}$ come from the identical vertex $u_{v}$ in $\Gamma$, namely, $v \in f^{-1}(u_{v})$. 
This implies that $w$ contains a subword of the form $u_v^{\varepsilon} \bar{x} u_v^{- \varepsilon}$, where $\bar{x}$ is a word in $A(\Gamma)$ and $\varphi(\bar{x})$ is a subword of $x$. 
Since $\varphi(\bar{x})$ is a subword of $x$ and since $f$ satisfies the property $(*)$, every letter  contained in $\bar{x}$ is adjacent to $u_v$ in $\Lambda$. 
This implies that a cancelation $u_v^{\varepsilon} \bar{x} u_v^{- \varepsilon} \equiv \bar{x}$ occurs in $w$, and therefore $w$ is not reduced, a contradiction. 

By the same argument, we can verify that $\varphi$ is a bi-Lipschitz map with a Lipschitz constant $\mathrm{max} \{ \# f^{-1}(v) \mid v \in V(\Lambda) \}$, because reduced words are shortest representations in right-angled Artin groups and $\varphi$ maps any reduced word to a reduced word. 
\end{proof}

Let $\pi \colon S \rightarrow N$ be the orientation double covering. 
Given a curve $\alpha$ on $N$, we have the set of lifts $\tilde{\alpha} = \{ \gamma, J(\gamma) \}$ on $S$ (a curve $J(\gamma)$ is possible to be $\gamma$). 
From a finite full subgraph $\Gamma \leq \mathcal{C}^{\mathrm{two}}(N)$, we obtain a finite subgraph $\tilde{\Gamma} \leq \mathcal{C}(S)$ induced by a union of the sets of lifts. 

\begin{lemma} \label{cov_raag_inj}
Let $\Gamma$ be a finite full subgraph of $\mathcal{C}^{\mathrm{two}}(N)$. 
Then there exists a surjective full map $\tilde{\Gamma} \rightarrow \Gamma$ satisfying condition $(*)$ in Lemma \ref{co_hom_inj}. 
\end{lemma}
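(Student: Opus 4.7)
The natural candidate is the covering projection $f\colon \tilde{\Gamma} \to \Gamma$ sending each vertex of $\tilde{\Gamma}$ (a lift) to the isotopy class of the curve it covers in $N$. This is well-defined on isotopy classes and surjective by the construction of $\tilde{\Gamma}$. What remains is to verify fullness and condition $(*)$, and both should follow from elementary covering-space arguments together with the bigon criterion. The guiding fact is that two-sided curves in $N$ are precisely the orientation-preserving ones, hence each lifts to a pair of (possibly isotopic) disjoint curves in $S$ interchanged by $J$.

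For fullness, suppose $u_1, u_2 \in V(\Gamma)$ are adjacent, and pick disjoint representatives on $N$. For any $v_i \in f^{-1}(u_i)$, the chosen representative of $v_i$ is a component of $\pi^{-1}(u_i)$, and $\pi^{-1}(u_1) \cap \pi^{-1}(u_2) = \pi^{-1}(u_1 \cap u_2) = \emptyset$, so $v_1$ and $v_2$ admit disjoint representatives on $S$. Moreover $v_1 \neq v_2$ as isotopy classes, since otherwise $\pi(v_1) = u_1$ and $\pi(v_2) = u_2$ would be isotopic in $N$. Hence $v_1, v_2$ are adjacent in $\mathcal{C}(S)$, and therefore in the induced subgraph $\tilde{\Gamma}$.

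For condition $(*)$, take non-adjacent $u_1 \neq u_2$ in $\Gamma$, and pick representatives in minimal position on $N$. Then $u_1 \cap u_2 \neq \emptyset$; choose $p \in u_1 \cap u_2$. Given any $v_1 \in f^{-1}(u_1)$, at least one point $q$ of $\pi^{-1}(p)$ lies on the representative of $v_1$ (since $\pi^{-1}(p) \subset \pi^{-1}(u_1) = v_1 \cup J(v_1)$). Since $p \in u_2$ as well, the same point $q$ lies on some component of $\pi^{-1}(u_2)$, i.e., on some lift $v_2$ of $u_2$, and $q \in v_1 \cap v_2$. To conclude that $v_1$ and $v_2$ are non-adjacent in $\tilde{\Gamma}$, I would invoke the bigon criterion: any bigon on $S$ bounded by arcs of $v_1$ and $v_2$ would project, by local injectivity of $\pi$, to a bigon on $N$ bounded by arcs of $u_1$ and $u_2$, contradicting their minimal position. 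Hence $v_1$ and $v_2$ are in minimal position on $S$ with $i_S(v_1, v_2) \geq 1$, so they are non-adjacent in $\mathcal{C}(S)$ and thus in $\tilde{\Gamma}$.

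The main delicate step is the clean invocation of the bigon-lifting fact used to promote the set-theoretic intersection $q$ to an essential one; the rest is straightforward bookkeeping with the covering $\pi$ and the covering involution $J$.
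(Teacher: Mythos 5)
Your proposal is correct and follows essentially the same route as the paper: the map is the one induced by the covering projection $\pi$, fullness comes from lifting disjoint representatives, and condition $(*)$ comes from producing an essential intersection between the prescribed lift $v_1$ and some lift of $u_2$, ruling out bigons upstairs by projecting them down to $N$. Your explicit step of lifting the intersection point $p$ so that it lands on the given $v_1$ (which is exactly what the ``for any $v \in f^{-1}(u)$'' quantifier in $(*)$ demands) is, if anything, slightly more careful than the paper's corresponding passage.
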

\begin{proof}[Proof of \ref{cov_raag_inj}]
The covering map $\pi \colon S \rightarrow N$ induces a surjection of $\tilde{\Gamma}$ onto $\Gamma$, denoted  by $p$. 
Since a pair of disjoint curves is lifted to a system of mutually disjoint curves, the map $p$ is full.  
To see that $p$ satisfies condition $(*)$, suppose that $\alpha$ and $\beta$ are curves on $N$ intersecting at  least once. 
Pick a connected component $c$ of a union of the sets of lifts, $\tilde{\alpha}$ and $\tilde{\beta}$, which contains lifts of $\alpha$ and $\beta$. 
If $c$ forms a bigon, then $\pi(c) = \alpha \cup \beta$ also forms a bigon. 
Hence, if $\alpha$ and $\beta$ are in minimal position on $N$, then each pair of curves in $\tilde{\alpha} \cup \tilde{\beta}$ are in minimal position on $S$. 
This implies that $p$ satisfies condition $(*)$ in Lemma \ref{co_hom_inj}. 
\end{proof}

\begin{proof}[Proof of Theorem \ref{condition_raag_in_nonorimcg}]
Suppose that $\Gamma$ is a finite full subgraph of $\mathcal{C}^{\mathrm{two}}(N)$. 
By Lemmas \ref{co_hom_inj} and \ref{cov_raag_inj}, we have an embedding $\varphi \colon A(\Gamma) \hookrightarrow A(\tilde{\Gamma})$. 
Since the vertex set of $\tilde{\Gamma}$ corresponds to a curve system on $S$, there exists a number $n$ such that the $n$-th powers of Dehn twists induce an embedding $\psi \colon A(\tilde{\Gamma}) \hookrightarrow \mathrm{Mod}(S)$ by Koberda's theorem \cite[Theorem 1.1]{Koberda12}. 
Hence, we have an embedding $\psi \circ \varphi \colon A(\Gamma) \hookrightarrow  \mathrm{Mod}(S)$. 
By the definition of $\tilde{\Gamma}$, the image of $\psi \circ \varphi$ consists of mapping classes  represented by homeomorphisms commutative with the deck transformation $J$. 
This implies that $A(\Gamma) \leq Z^{\pm}([J])$, in particular $A(\Gamma) \leq \mathrm{Mod}(N)$. 
\end{proof}

Runnels in \cite{Runnels21} and Seo in \cite{Seo21} gave bounds for generating right-angled Artin groups in the mapping class groups of orientable surfaces in terms of intersection number. 
Our proof for Theorem \ref{condition_raag_in_nonorimcg} together with their results also implies that we can give bounds for generating right-angled Artin groups in the mapping class groups of nonorientable surfaces in terms of intersection number. 

Let us introduce some applications of Theorem \ref{condition_raag_in_nonorimcg}. 
Let $P_4$ be the path graph on four vertices (the graph homeomorphic to the closed unit interval, which consists of four vertices). 

\begin{proposition} \label{p4_nonori}
The right-angled Artin group $A(P_4)$ can be embedded in $\mathrm{Mod}(N_{1, p})$ if and only if $p \geq 4$. 
\end{proposition}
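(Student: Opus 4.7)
The strategy is to reduce Proposition \ref{p4_nonori} to a combinatorial question about $\mathcal{C}^{\mathrm{two}}(N_{1,p})$ via Theorems \ref{condition_raag_in_nonorimcg} and \ref{low_genus}, exploiting the standard identification $\mathcal{C}^{\mathrm{two}}(N_{1,p}) \cong \mathcal{C}(S_{0,p+1})$ in which the crosscap plays the role of an additional marked point. Under this identification, essential two-sided curves on $N_{1,p}$ correspond to partitions $\{A, A^c\}$ of $\{1, \ldots, p+1\}$ with $|A|, |A^c| \geq 2$ (the disk side is the one not containing the crosscap-point), and two such curves are disjoint if and only if the corresponding subsets are nested or disjoint as sets. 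Setting up this correspondence at the vertex and edge level is the first step.

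For the ``if'' direction ($p \geq 4$), the plan is to exhibit an explicit induced $P_4$ in $\mathcal{C}(S_{0,p+1})$ and invoke Theorem \ref{condition_raag_in_nonorimcg}. Consider the four $2$-element subsets $A_1 = \{1,2\}$, $A_2 = \{3,4\}$, $A_3 = \{1,5\}$, $A_4 = \{2,3\}$ of $\{1, \ldots, p+1\}$; since $p+1 \geq 5$, each complement has cardinality at least $3$, so each of the corresponding four curves is essential. For $2$-subsets of a ground set of cardinality at least $5$, disjointness of the associated curves collapses to set-theoretic disjointness of the subsets (neither proper nesting nor summing to the full set is possible). One then reads off that the three consecutive pairs $A_i \cap A_{i+1}$ are empty while the three non-consecutive pairs each share exactly one element, producing precisely the edge structure of $P_4$.

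For the ``only if'' direction ($p \leq 3$), the plan is to apply Theorem \ref{low_genus}. The special-case conventions for small surfaces together with the formula $\xi^{\mathrm{two}}(N_{1,m}) = m-2$ valid once $\chi(N_{1,m}) < 0$ yield $\xi^{\mathrm{two}}(N_{1,p}) \leq 1$ for every $p \leq 3$, so it suffices to verify that $P_4 \not\leq \mathcal{C}^{\mathrm{two}}(N_{1,p})$. For $p \leq 2$, the sphere $S_{0,p+1}$ has at most three marked points and so carries no essential simple closed curves, forcing $\mathcal{C}^{\mathrm{two}}(N_{1,p}) = \emptyset$; for $p=3$, the graph $\mathcal{C}^{\mathrm{two}}(N_{1,3}) \cong \mathcal{C}(S_{0,4})$ consists of the three $(2,2)$-partitions of a $4$-set, no two of which are nested or disjoint, hence three pairwise nonadjacent vertices. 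In every case the graph has at most three vertices and no edges, so $P_4$ cannot occur as a full subgraph.

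The only non-mechanical point in the plan is pinning down the curve-level correspondence $\mathcal{C}^{\mathrm{two}}(N_{1,p}) \cong \mathcal{C}(S_{0,p+1})$, but this is routine once one notes that essential two-sided curves on $N_{1,p}$ bound a disk on one side and a M\"obius band containing the crosscap on the other. Once the correspondence is in hand, Theorems \ref{condition_raag_in_nonorimcg} and \ref{low_genus} carry all of the algebraic content and the remaining combinatorics is elementary.
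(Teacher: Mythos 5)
Your overall logic is sound, and your ``if'' direction follows the paper's strategy exactly: exhibit $P_4$ as a full subgraph of $\mathcal{C}^{\mathrm{two}}(N_{1,p})$ and invoke Theorem \ref{condition_raag_in_nonorimcg}. The paper merely asserts the existence of four suitable curves (consecutive ones intersecting, non-consecutive ones disjoint, which also gives $P_4$ since $P_4$ is self-complementary), whereas you produce an explicit configuration; your subsets $A_1=\{1,2\}$, $A_2=\{3,4\}$, $A_3=\{1,5\}$, $A_4=\{2,3\}$ do yield the correct adjacency pattern. For the ``only if'' direction you route through Theorem \ref{low_genus}, while the paper argues directly that $\xi^{\mathrm{two}}(N_{1,p})\le 1$ forces $\mathrm{Mod}(N_{1,p})$ to contain no $\mathbb{Z}^2$, whereas $A(P_4)$ contains one. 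The two arguments coincide at bottom (the $\xi^{\mathrm{two}}\le 1$ case of Theorem \ref{low_genus} is proved by exactly this $\mathbb{Z}^2$ obstruction), and since that theorem is established independently of the proposition there is no circularity; the paper's version is simply more economical.

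One intermediate claim is false as stated and should be weakened. The vertices of $\mathcal{C}^{\mathrm{two}}(N_{1,p})$ (equivalently of $\mathcal{C}(S_{0,p+1})$) are isotopy classes of curves, and for $p+1\ge 5$ infinitely many pairwise non-isotopic essential curves induce the same partition of the marked points; so the two-sided curve graph is not the partition graph, and likewise $\mathcal{C}^{\mathrm{two}}(N_{1,3})$ has infinitely many vertices, not three. What is true, and is all you need, is the one-directional statement: if all four sets $A\cap B$, $A\cap B^{c}$, $A^{c}\cap B$, $A^{c}\cap B^{c}$ are nonempty, then every pair of representatives must intersect, while if one of them is empty, then disjoint representatives can be chosen. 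With that correction your non-consecutive pairs are forced to intersect for any choice of representatives, and it remains to draw one explicit system of four curves realizing the three consecutive disjointnesses simultaneously (e.g.\ boundaries of regular neighborhoods of suitable arcs between marked points), which is routine and no less rigorous than the paper's own assertion. The $p\le 3$ case is unaffected, since the only fact used there is that $\mathcal{C}^{\mathrm{two}}(N_{1,p})$ has no edges, which follows from $\xi^{\mathrm{two}}(N_{1,p})\le 1$ regardless of the vertex count.
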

\begin{proof}[Proof of Proposition \ref{p4_nonori}]
Suppose $p \geq 4$. 
We can find a sequence of two-sided essential simple curves $(\alpha_1, \alpha_2, \alpha_3, \alpha_4)$ such that $\alpha_i$ and $\alpha_{i+1}$ intersect exactly twice in minimal position and non-adjacent curves are represented disjointly on $N_{1, p}$. 
Hence, $P_4 \leq \mathcal{C}^{two}(N_{1, p})$. 
Applying Theorem \ref{condition_raag_in_nonorimcg} to this finite full subgraph, we have $A(P_4) \leq \mathrm{Mod}(N_{1, p})$. 

Suppose $p \leq 3$. 
Then $\mathrm{Mod}(N_{1, p})$ does not contain $\mathbb{Z}^{2}$, because the two-sided complexity of $N_{1, p}$ is at most one. 
Since $A(P_4)$ contains a subgroup isomorphic to $\mathbb{Z}^{2}$, $A(P_4)$ cannot be embedded in $\mathrm{Mod}(N_{1, p})$. 
\end{proof}

A group $G$ is said to be {\it locally extended residually finite} if for any finitely generated subgroup $H$ of $G$ and any element $g \in G - H$, there exists a finite quotient $\phi \colon G \rightarrow Q$ such that $\phi(g)$ is non-trivial in $Q$. 
In other words, a locally extended residually finite group is a group whose finitely generated subgroups are closed subgroups in the profinite topology. 
As proved in \cite[Theorem 1.2]{Niblo--Wise00}, the right-angled Artin group $A(P_4)$ is not locally extended residually finite. 

\begin{corollary} \label{LERF}
Suppose $p \neq 3$. 
The mapping class group $\mathrm{Mod}(N_{1, p})$ is locally extended residually finite if and only if $p \leq 2$. 
\end{corollary}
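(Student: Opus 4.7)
The plan is to prove the biconditional by treating the two implications separately. For the direction ``$\mathrm{Mod}(N_{1,p})$ LERF implies $p\leq 2$,'' I would argue by contrapositive. If $p\geq 4$, then Proposition \ref{p4_nonori} provides an embedding $A(P_{4})\hookrightarrow \mathrm{Mod}(N_{1,p})$, and by the Niblo--Wise theorem recalled just before the statement, $A(P_{4})$ is not LERF. Because LERF passes to subgroups --- any finite quotient of an ambient group that separates a finitely generated subgroup $K$ from an outside element $g$ restricts to a finite quotient of the subgroup with the same separating property --- we conclude that $\mathrm{Mod}(N_{1,p})$ is not LERF either.

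For the converse I would split on $p$. When $p\in\{0,1\}$, the surface $N_{1,p}$ has non-negative Euler characteristic and $\mathrm{Mod}(N_{1,p})$ is finite (trivial for $p=0$ by Baer, and finite for $p=1$ by a classical calculation), and every finite group is trivially LERF. For $p=2$ we have $\chi(N_{1,2})=-1<0$, so Lemma \ref{ori_cov_mcg_inj} embeds $\mathrm{Mod}(N_{1,2})$ into $\mathrm{Mod}^{\pm}(S_{0,4})$, the extended mapping class group of the orientation double cover. The plan is then to invoke the classical identification $\mathrm{PMod}(S_{0,4})\cong F_{2}$ (obtained, for instance, from the Birman exact sequence together with the triviality of $\mathrm{PMod}(S_{0,3})$); enlarging by the symmetric group on the four punctures and by an orientation-reversing involution each contributes only finite index, so $\mathrm{Mod}^{\pm}(S_{0,4})$ is virtually free.

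Since virtually free groups are LERF (by M.~Hall's theorem for free groups together with the fact that LERF is preserved under finite extensions), and since LERF passes to subgroups, the subgroup $\mathrm{Mod}(N_{1,2})$ of the virtually free group $\mathrm{Mod}^{\pm}(S_{0,4})$ is itself LERF, completing the converse. The main obstacle I anticipate is of a bookkeeping nature: verifying that virtual freeness is preserved in passing from $\mathrm{Mod}(S_{0,4})$ to $\mathrm{Mod}^{\pm}(S_{0,4})$ is immediate from the index-two inclusion, but one must combine the subgroup-closure and the finite-extension-closure of LERF without circularity. The numerical hypothesis $p\neq 3$ is forced in the statement precisely because neither strategy applies in that case: Proposition \ref{p4_nonori} blocks the contrapositive route since $A(P_{4})\not\leq \mathrm{Mod}(N_{1,3})$, while the orientation double cover $S_{0,6}$ has mapping class group containing a rank-two free abelian subgroup and is therefore not virtually free.
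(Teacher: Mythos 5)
Your proof is correct, and the $p\geq 4$ direction coincides exactly with the paper's (embed $A(P_4)$ via Proposition \ref{p4_nonori}, invoke Niblo--Wise, and use that LERF passes to subgroups). Where you genuinely diverge is the case $p=2$: the paper disposes of all of $p\leq 2$ in one line by observing that $\mathrm{Mod}(N_{1,p})$ is a \emph{finite} group for $p\leq 2$ (citing Korkmaz for the isomorphism types; in particular $\mathrm{Mod}(N_{1,2})\cong \mathbb{Z}_2\oplus\mathbb{Z}_2$), and finite groups are trivially LERF. You instead pass to the orientation double cover, identify $\mathrm{Mod}^{\pm}(S_{0,4})$ as virtually free via $\mathrm{PMod}(S_{0,4})\cong F_2$, and then chain together M.~Hall's theorem, closure of LERF under finite extensions, and closure under subgroups. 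This is a valid argument (each closure property you invoke is standard and you apply them in a consistent order), but it is considerably heavier machinery than needed, and it suggests you did not notice that $\mathrm{Mod}(N_{1,2})$ is itself finite. The one thing your route buys is robustness: it would still work if the group were infinite but virtually free, which is exactly the feature that fails for $p=3$, as your closing remark about $S_{0,6}$ correctly identifies --- consistent with the paper's admission that the LERF status of $\mathrm{Mod}(N_{1,3})$ is open.
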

\begin{proof}[Proof of \ref{LERF}]
Suppose $p \leq 2$. 
Since $\mathrm{Mod}(N_{1, p})$ is a finite group, $\mathrm{Mod}(N_{1, p})$ is locally extended residually finite. 
For the isomorphism type of $\mathrm{Mod}(N_{1, p})$, see \cite[Theorem 4.1 and Corollary 4.6]{Korkmaz02}. 

Suppose $p \geq 4$. 
By Proposition \ref{p4_nonori}, $A(P_4) \leq \mathrm{Mod}(N_{1, p})$. 
Since $A(P_4)$ is not locally extended residually finite, $\mathrm{Mod}(N_{1, p})$ is not locally extended residually finite either. 
\end{proof}

It is very natural to think that $\mathrm{Mod}(N_{1, 3})$ is locally extended residually finite, but the authors do not have a proof. 
As is well-known, the mapping class groups of orientable or nonorientable surfaces are residually finite, i.e., for every non-trivial element $g$ in the mapping class group, there exists a finite quotient that realizes $g$ as a non-trivial element. 

Baik--Kim--Koberda in \cite{Baik--Kim--Koberda} studied the ``smoothability" of actions of mapping class groups to compact one-manifolds. 
By combining their results and Proposition \ref{p4_nonori}, we have the following. 

\begin{corollary} \label{bounded_variation}
Let $M$ be a compact one-manifold and $p$ be an integer $\geq 4$. 
Then no finite index subgroup of $\mathrm{Mod}(N_{1, p})$ is embedded in the group of orientation-preserving $C^1$-diffeomorphisms of $M$ whose first derivative have bounded variation. 
\end{corollary}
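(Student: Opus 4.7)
The plan is to combine the embedding $A(P_4) \hookrightarrow \mathrm{Mod}(N_{1,p})$ provided by Proposition \ref{p4_nonori} with the theorem of Baik--Kim--Koberda~\cite{Baik--Kim--Koberda} that rules out faithful actions of $A(P_4)$ on a compact one-manifold by $C^1$-diffeomorphisms whose first derivatives have bounded variation. First, Proposition \ref{p4_nonori} gives $A(P_4) \leq \mathrm{Mod}(N_{1,p})$ whenever $p \geq 4$, so the issue is to push this non-embedding statement down to arbitrary finite index subgroups.

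Next, I would argue by contradiction. Suppose some finite index subgroup $H \leq \mathrm{Mod}(N_{1,p})$ admits an injective homomorphism $\iota \colon H \hookrightarrow \mathrm{Diff}^{1+\mathrm{bv}}_{+}(M)$. The key step is to show that this forces $A(P_4)$ itself to embed into $\mathrm{Diff}^{1+\mathrm{bv}}_{+}(M)$. Setting $k = [A(P_4) : H \cap A(P_4)]$, every $k$-th power of an element of $A(P_4)$ lies in $H \cap A(P_4)$ because $A(P_4)$ is torsion-free. In particular the $k$-th powers of the four standard vertex generators of $A(P_4)$ lie in $H \cap A(P_4)$, and by the normal form for right-angled Artin groups these $k$-th powers generate a subgroup of $A(P_4)$ again isomorphic to $A(P_4)$ (indeed, any would-be reduction in a word $a_1^{n_1} \cdots a_r^{n_r}$ with $n_i \in k\mathbb{Z}\setminus\{0\}$ is already accounted for by the commutation relations of $P_4$). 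Composing with $\iota$ then yields an embedding $A(P_4) \hookrightarrow \mathrm{Diff}^{1+\mathrm{bv}}_{+}(M)$.

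Finally, I would invoke the Baik--Kim--Koberda non-smoothing theorem to contradict the existence of such an embedding, completing the proof. The main obstacle will be pinning down the precise formulation of the Baik--Kim--Koberda result in \cite{Baik--Kim--Koberda}: if they state their theorem already at the level of finite index subgroups of any group containing $A(P_4)$, then the passage through $k$-th powers above is unnecessary; otherwise that reduction is the essential bridge between the mapping class group statement and their non-embedding theorem for $A(P_4)$, and one should double-check that the target of the embedding (the bounded-variation group for $M$, as opposed to a specific model like $S^1$ or $[0,1]$) matches the hypotheses of their theorem.
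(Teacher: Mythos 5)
Your overall strategy is exactly the one the paper intends: the paper gives no written proof of Corollary \ref{bounded_variation} beyond the remark that it follows by combining Proposition \ref{p4_nonori} with the results of Baik--Kim--Koberda, and your reduction of the finite-index statement to the non-embeddability of $A(P_4)$ in $\mathrm{Diff}^{1+\mathrm{bv}}_{+}(M)$, via common powers of the four vertex generators, is the standard bridge one has in mind. (For the record, the core theorem of Baik--Kim--Koberda is that $(F_2\times\mathbb{Z})*\mathbb{Z}$ admits no faithful $C^{1+\mathrm{bv}}$ action on a compact one-manifold; they deduce consequences both for $A(P_4)$ and for finite-index subgroups of mapping class groups, and either formulation feeds into your argument.)

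One step as written is false, however. If $K\le G$ has index $k$ and $K$ is not normal, it is \emph{not} true that $g^{k}\in K$ for every $g\in G$, and torsion-freeness of $G$ is irrelevant here. Concretely, in $F_2=\langle x,y\rangle$ with the surjection onto $S_3$ sending $x\mapsto(2\,3)$ and $y\mapsto(1\,2\,3)$, the preimage $K$ of the stabilizer of $1$ has index $3$, yet $xy\mapsto(1\,3)$, so $(xy)^{3}\notin K$. What is true is that $g^{k!}\in K$ for all $g$ (the $\langle g\rangle$-orbit of the coset $K$ has size at most $k$), or one may pass to the normal core of $H\cap A(P_4)$, which has finite index $m$, and use $m$-th powers. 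Since you only need a single common power of the four generators of $A(P_4)$ to land in $H\cap A(P_4)$, either repair is one line, and your normal-form argument that the subgroup generated by these powers is again isomorphic to $A(P_4)$ goes through unchanged. With that correction the proof is complete and coincides with the paper's intended argument.
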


This corollary implies that no finite index subgroup of $\mathrm{Mod}(N_{1, p})$ is embedded in the group of orientation-preserving $C^2$-diffeomorphisms of a compact one-manifold if $p \geq 4$. 

\begin{remark}
Suppose $p \geq 1$. 
By Lemma \ref{ori_cov_mcg_inj}, $\mathrm{Mod}(N_{1, p})$ is embedded in $\mathrm{Mod}(S_{0, 2p})$. 
According to Farb--Franks theorem (see \cite[Theorem 1.9]{Baik--Kim--Koberda}), $\mathrm{Mod}(S_{0, 2p})$ virtually embeds into the group $\mathrm{Diff}_{+}^{1}(M)$ of orientation-preserving $C^1$-diffeomorphism of a connected one-manifold $M$. 
Hence, $\mathrm{Mod}(N_{1, p})$ also virtually embeds into $\mathrm{Diff}_{+}^{1}(M)$. 
\end{remark}

See also a recent work \cite{Kim--Koberda--Rivas} due to Kim--Koberda--Rivas, for more applications of embedding right-angled Artin groups into groups. 

\begin{remark} \label{kapovich_rem}
Note that Lemma \ref{co_hom_inj} generalizes \cite[Lemma 2.3]{Kapovich}. 
In \cite{Kapovich}, Kapovich proved that any surjective and locally surjective graph morphism $\Lambda \rightarrow \Gamma$ between finite graphs induces the diagonal embedding $A(\Gamma^c) \hookrightarrow A(\Lambda^c)$ similarly as in the statement of Lemma \ref{co_hom_inj}. 
Here, a map $f$ between graphs is called a {\it graph morphism} if $f$ sends each pair of adjacent vertices to adjacent vertices.  
A map $f \colon \Lambda \rightarrow \Gamma$ is said to be {\it locally surjective} if for every vertex $v$ in $\Lambda$ and every edge $e$ of $\Gamma$ incident to $f(v)$, there exists an edge $\tilde{e}$ of $\Lambda$ such that $\tilde{e}$ is incident to $v$ and that $f(\tilde{e}) = e$. 
Our definition of local surjectivity is not equivalent to the definition in \cite{Kapovich}, but we have to employ our definition to prove \cite[Lemma 2.3]{Kapovich}. 

Given a map $f \colon \Lambda \rightarrow \Gamma$, we can consider the $0$-dimensional map $f^{0} \colon V(\Lambda) \rightarrow V(\Gamma); u \mapsto f(u)$. 
We regard the map $f^{0}$ as a map from $\Lambda^c$ to $\Gamma^c$ which is called the {\it complementary map} of $f$ and denoted by $f^c$. 
The complementary map $f^c$ satisfies condition $(*)$ if and only if $f$ is locally surjective. 
If $f$ is a graph morphism, then $f^c$ is full. 
This implies that Lemma \ref{co_hom_inj} is a generalization of Kapovich's result. 
We note that, if every fiber of a surjective full map $f^c \colon \Lambda^c \rightarrow \Gamma^c$ with condition $(*)$ consists of mutually adjacent vertices in $\Lambda^c$, then we have a surjective and locally surjective graph morphism $\Lambda \rightarrow \Gamma$. 
\end{remark}

\begin{remark}
Let $p \colon \mathcal{T} \rightarrow \Gamma$ be the universal covering of a graph $\Gamma$. 
Kim--Koberda in \cite{Kim--Koberda15} proved that there exists a finite subtree $T \leq \mathcal{T}$ such that the diagonal homomorphism $A(\Gamma^c) \hookrightarrow A(T^c)$ is injective. 
Let us explain the difference between Kim--Koberda's method and Lemma \ref{co_hom_inj}. 
The diagonal homomorphism obtained in Lemma \ref{co_hom_inj} has a property that, for any pair of non-adjacent vertices $v_1 \neq v_2$, the word given by $\varphi(v_2^{-1}) \varphi(v_1) \varphi(v_2)$ is reduced in the first place. 
On the other hand, the diagonal homomorphism obtained from Kim--Koberda's method does not have such a property in general. 
Moreover, a fiber of a surjective full map with condition $(*)$ can form a finite graph arbitrarily. 
On the other hand, each fiber of the complementary map of a covering map forms a clique. 
\end{remark}


\section{Proof of Theorem~\ref{opposite_is_not_true}}\label{Proof_of_theorem_opposite_is_not_true}

Let $\Gamma_{0}$ and $\Gamma_{1}$ be the finite graphs shown in Figure~\ref{fig_Gamma_0_and_Gamma_1}.
We denote by $C_{4}$ the 4-cycle spanned by $\{a, b, c, d\}$. 
Let $\varphi \colon A(\Gamma_{0})\rightarrow A(\Gamma_{1})$ be the map defined by $\varphi(q)=ef$ and $\varphi(v)=v$ for any $v\in V(\Gamma_{0})-\{q\}$. 
For a graph $\Gamma$, we will denote by $\langle v\rangle$ the subgroup of $A(\Gamma)$ generated by a single vertex $v\in V(\Gamma)$. 

\begin{figure}[h]
\begin{center}
\includegraphics[scale=0.35]{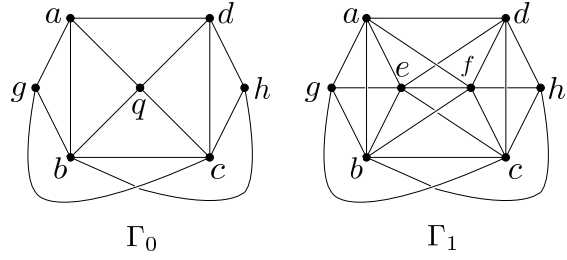}
\caption{Two graphs $\Gamma_{0}$ and $\Gamma_{1}$.}\label{fig_Gamma_0_and_Gamma_1}
\end{center}
\end{figure}

\begin{lemma}[{\cite[Lemma 11]{Kim-Koberda16}}]\label{first_lemma_of_third_thm_Kim-Koberda16}
The homomorphism $\varphi \colon A(\Gamma_{0})\rightarrow A(\Gamma_{1})$ is injective.
\end{lemma}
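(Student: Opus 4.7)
The plan is to recognise $\varphi$ as an instance of the diagonal embedding provided by Lemma~\ref{co_hom_inj}. Define a map of graphs $\pi \colon \Gamma_{1}\rightarrow \Gamma_{0}$ by $\pi(e)=\pi(f)=q$ and $\pi(v)=v$ for every other vertex. Since $\{e,f\}\in E(\Gamma_{1})$, the letters $e$ and $f$ commute in $A(\Gamma_{1})$, so the assignment $u\mapsto \prod_{w\in \pi^{-1}(u)} w$ is well defined and sends $q$ to $ef$ and each other vertex to itself; this is precisely $\varphi$. By Lemma~\ref{co_hom_inj} it is therefore enough to verify that $\pi$ is a surjective full map satisfying condition $(*)$, and injectivity of $\varphi$ follows at once.

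Surjectivity is obvious. For fullness one only has to inspect the star of $q$, because $\pi$ is the identity on every other vertex. Reading off Figure~\ref{fig_Gamma_0_and_Gamma_1}, the graph $\Gamma_{1}$ is obtained from $\Gamma_{0}$ by replacing the vertex $q$ with an edge $\{e,f\}$ and joining both of $e,f$ to every former neighbor of $q$. Consequently, if $u_{1}=q$ and $u_{2}$ is a neighbor of $q$ in $\Gamma_{0}$, then both preimages $e$ and $f$ of $u_{1}$ are adjacent to $u_{2}=\pi^{-1}(u_{2})$ in $\Gamma_{1}$, which is exactly the fullness requirement at the pair $(q,\cdot)$.

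Condition $(*)$ is verified dually. For a pair of non-adjacent vertices $u\neq u'$ in $\Gamma_{0}$ not involving $q$ there is nothing to check, as $\pi$ is injective on these vertices. For the pair $(q,u')$ where $u'$ is non-adjacent to $q$ in $\Gamma_{0}$, the same dictionary between the link of $q$ in $\Gamma_{0}$ and the common link of $\{e,f\}$ in $\Gamma_{1}$ shows that $u'$ is non-adjacent to both $e$ and $f$, so each of $e,f\in \pi^{-1}(q)$ does have a non-neighbor (namely $u'$ itself) in $\pi^{-1}(u')$. Both hypotheses of Lemma~\ref{co_hom_inj} therefore hold, producing the desired injective homomorphism.

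The only genuinely substantive point in this strategy is the combinatorial observation that the neighbors of $q$ in $\Gamma_{0}$ coincide with the common neighbors of $e$ and $f$ in $\Gamma_{1}$, since this single fact underwrites both fullness and condition $(*)$ at the vertex $q$; once this is read off Figure~\ref{fig_Gamma_0_and_Gamma_1}, the rest of the verification is routine, so I do not expect a serious obstacle in executing the argument.
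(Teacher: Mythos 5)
Your strategy cannot succeed, and the obstruction is already visible elsewhere in the paper. The crux is your reading of Figure~\ref{fig_Gamma_0_and_Gamma_1}: you assert that $\Gamma_{1}$ is obtained from $\Gamma_{0}$ by replacing $q$ with a pair $e,f$ each joined precisely to the former neighbours of $q$. If that were the case, the vertices $\{a,b,c,d,g,h,e\}$ would induce a copy of $\Gamma_{0}$ inside $\Gamma_{1}$ (with $e$ playing the role of $q$), whence $\Gamma_{0}\leq\Gamma_{1}\leq\mathcal{C}^{\mathrm{two}}(N)$ for $N=N_{1,6},N_{3,3},N_{5,0}$ by Lemma~\ref{embedding_Gamma_{1}_into_two-sided_curve_graphs} --- contradicting Lemma~\ref{impossibility_to_be_subgraph} and destroying Theorem~\ref{opposite_is_not_true} altogether. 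In the actual graphs, $e$ and $f$ each pick up an \emph{extra} neighbour beyond $\mathrm{Lk}_{\Gamma_0}(q)=\{a,b,c,d\}$: one of them is adjacent to $g$ and the other to $h$ (while $q$ is adjacent to neither $g$ nor $h$). This is exactly what makes $ef$ fail to commute with $g$ and with $h$ without letting either $e$ or $f$ alone serve as a substitute for $q$. As a result, condition $(*)$ of Lemma~\ref{co_hom_inj} fails for your map $\pi$ at the non-adjacent pair $(q,g)$: the preimage $e\in\pi^{-1}(q)$ is adjacent to the unique element of $\pi^{-1}(g)=\{g\}$, so it has no non-neighbour there.

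The failure is not repairable. For any surjective full map $\pi\colon\Gamma_{1}\rightarrow\Gamma_{0}$ whose only non-trivial fibre is $\{e,f\}\mapsto q$ (which is what is needed for the diagonal map to coincide with $\varphi$), fullness forces $\mathrm{Lk}(e)\supseteq\{a,b,c,d\}$ and condition $(*)$ forces $\mathrm{Lk}(e)\cap\{g,h\}=\emptyset$; together these force $\Gamma_{0}$ to be a full subgraph of $\Gamma_{1}$, which the paper shows is impossible for the surfaces under consideration. So Lemma~\ref{co_hom_inj} simply does not apply to this $\varphi$: the injectivity of $q\mapsto ef$ here is a genuinely more delicate fact than a diagonal embedding over a fibration of graphs, and the paper does not reprove it --- it quotes the direct combinatorial argument of Kim--Koberda (\cite[Proof of Lemma 11]{Kim-Koberda16}). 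To fix your write-up you would need to reproduce (or cite) an argument of that kind rather than appeal to Lemma~\ref{co_hom_inj}.
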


\begin{proof}
See \cite[Proof of Lemma 11]{Kim-Koberda16}. 
\end{proof}

\begin{lemma}\label{embedding_Gamma_{1}_into_two-sided_curve_graphs}
For $N=N_{1, 6}$, $N=N_{3, 3}$, or $N=N_{5, 0}$, the graph $\Gamma_{1}$ is embedded into $\mathcal{C}^{\mathrm{two}}(N)$
\end{lemma}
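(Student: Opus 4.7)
The plan is to prove the lemma by explicit construction: for each of the three surfaces $N \in \{N_{1,6}, N_{3,3}, N_{5,0}\}$, exhibit six essential two-sided simple closed curves, labeled $\alpha_{a}, \alpha_{b}, \alpha_{c}, \alpha_{d}, \alpha_{e}, \alpha_{f}$, whose pairwise (non-)intersection pattern on $N$ matches the adjacency relation of $\Gamma_{1}$ exactly. Note that all three surfaces satisfy $\xi^{\mathrm{two}}(N)=4$, which makes it plausible that the same combinatorial picture can be realized on each.

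The first step is to read off from Figure~\ref{fig_Gamma_0_and_Gamma_1} the adjacency structure of $\Gamma_{1}$, starting from the $4$-cycle $C_{4}$ on $\{a,b,c,d\}$ and recording how the two additional vertices $e$ and $f$ attach. Knowing which pairs must be joined by an edge (to be represented by disjoint curves) and which pairs must be non-adjacent (to be represented by curves intersecting essentially in minimal position) is what we must realize geometrically.

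I would construct the configuration first on $N_{5,0}$, where the five crosscaps can be arranged linearly, and the four pairwise disjoint curves $\alpha_{a}, \alpha_{b}, \alpha_{c}, \alpha_{d}$ of a $C_{4}$-configuration can be drawn by encircling successive pairs of crosscaps (each bounding a Klein bottle on one side and is therefore two-sided). The two extra curves $\alpha_{e}, \alpha_{f}$ are then drawn so that the prescribed edges of $\Gamma_{1}$ are realized by disjointness and the prescribed non-edges by essential transverse intersections. I would then transfer the same combinatorial picture to $N_{3,3}$ and $N_{1,6}$ by systematically trading crosscaps for pairs of marked points: a curve enclosing two crosscaps on $N_{5,0}$ gets replaced by a curve enclosing two marked points on the other surfaces, and the dictionary preserves the two-sidedness and the intersection pattern.

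The last step is to check the \emph{full}-subgraph condition. It is not enough to have the right edges; we must verify that every non-edge of $\Gamma_{1}$ is actually witnessed by curves that intersect essentially, i.e.\ that no bigon is hidden in the picture. I would argue minimality of intersection using the standard bigon criterion on the universal (or orientation double) cover, ruling out the possibility that two of the drawn curves can be isotoped apart. The main obstacle I anticipate is precisely this bigon bookkeeping on $N_{1,6}$, where the low genus forces the crossings to be squeezed into a small region around the marked points, so some care is needed to make sure that the intersections of $\alpha_{e}$ and $\alpha_{f}$ with the $C_{4}$-curves are genuinely essential. Once a single clean picture is produced for each of the three surfaces, the lemma follows by inspection.
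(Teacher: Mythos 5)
Your overall strategy --- drawing explicit two-sided curves on each of the three surfaces and verifying the full-subgraph condition with the bigon criterion --- is exactly what the paper does. However, the construction as you describe it cannot realize $\Gamma_1$, for two concrete reasons. First, $\Gamma_1$ has \emph{eight} vertices, $a,b,c,d,e,f,g,h$: it is obtained from the seven-vertex graph $\Gamma_0$ on $\{a,b,c,d,g,h,q\}$ by replacing $q$ with the two vertices $e,f$ (which is why $\varphi(q)=ef$). A configuration of six curves therefore realizes the wrong graph; in particular the vertices $g$ and $h$, whose intersection pattern with $F_1$, $F_2$ and with $q$ is the whole point of the subsequent non-embeddability argument for $\Gamma_0$, are absent from your picture.

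Second, your description of the $C_4$-part as ``four pairwise disjoint curves'' is incompatible with $C_4$ being a \emph{full} subgraph of $\mathcal{C}^{\mathrm{two}}(N)$: edges of the curve graph correspond to disjoint curves, so in an induced $4$-cycle only the four consecutive pairs are disjoint, while the two diagonal pairs $\{a,c\}$ and $\{b,d\}$ must intersect essentially. Four pairwise disjoint curves would span a clique $K_4$, not $C_4$. (This intersecting structure is exactly what Lemma \ref{six_cases_of_the_two-sided_complexity_four_nonorientable_surfaces} exploits, where $F_1$ is a regular neighborhood of $a\cup c$ and $F_2$ of $b\cup d$.) Relatedly, curves encircling \emph{successive} pairs of crosscaps on $N_{5,0}$ interleave and hence intersect, so they are not pairwise disjoint either; the description is internally inconsistent. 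To repair the argument you would need to read the actual adjacency of $\Gamma_1$ from Figure \ref{fig_Gamma_0_and_Gamma_1} and exhibit eight curves with that exact pattern on each surface, as the paper does in Figure \ref{fig_three_nonori_surfaces_v1}, and then run your bigon check on that configuration.
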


\begin{figure}[h]
\begin{center}
\includegraphics[scale=0.693]{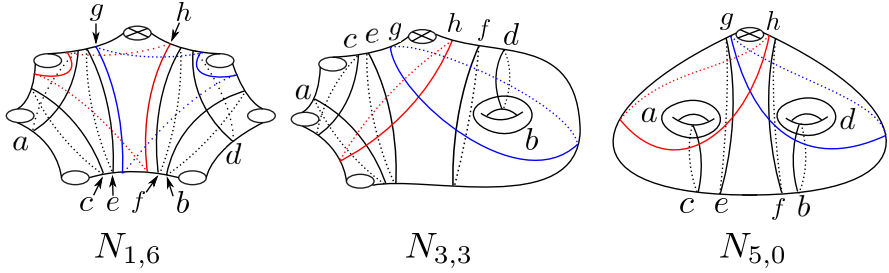}
\caption{Realizations for $\Gamma_1$ on nonorientable surfaces $N_{1, 6}$, $N_{3, 3}$, and $N_{5, 0}$.}\label{fig_three_nonori_surfaces_v1}
\end{center}
\end{figure}

\begin{proof}[Proof of Lemma~\ref{embedding_Gamma_{1}_into_two-sided_curve_graphs}]
We show $\Gamma_{1}$ is embedded into $\mathcal{C}^{\mathrm{two}}(N_{1, 6})$, $\mathcal{C}^{\mathrm{two}}(N_{3, 3})$, and $\mathcal{C}^{\mathrm{two}}(N_{5, 0})$ as a full subgraph. 
Note that the two-sided complexities of $N_{1, 6}$, $N_{3, 3}$, and $N_{5, 0}$ are all four. 
We put two-sided curves $a$, $b$, $c$, $d$, $e$, $f$, $g$, and $h$ in the nonorientable surfaces as in Figure~\ref{fig_three_nonori_surfaces_v1} so that they form the graph $\Gamma_{1}$ in the two-sided curve graphs. 
One can verify that the curves are in minimal position, since they do not bound a bigon in the surfaces. 
Hence $\Gamma_{1}$ is embedded into $\mathcal{C}^{\mathrm{two}}(N_{1, 6})$, $\mathcal{C}^{\mathrm{two}}(N_{3, 3})$, and $\mathcal{C}^{\mathrm{two}}(N_{5, 0})$ as a full subgraph. 
\end{proof}

\begin{remark} \label{the_embeddability_of_gamma_1_in_two-sided_curve_graphs}
The graph $\Gamma_{1}$ is embedded into $\mathcal{C}^{\mathrm{two}}(N)$ if and only if $N$ is a nonorientable surface whose two-sided complexity is at least $5$, or whose two-sided complexity is $4$ with $N=N_{1, 6}, N_{3, 3}, N_{5, 0}$.
\end{remark}

By combining Theorem~\ref{condition_raag_in_nonorimcg}, Lemmas~\ref{first_lemma_of_third_thm_Kim-Koberda16} and~\ref{embedding_Gamma_{1}_into_two-sided_curve_graphs}, we have the following corollary: 

\begin{corollary}\label{possibility_to_be_subgroup}
For $N=N_{1, 6},~N_{3, 3},~N_{5, 0}$, the right-angled Artin group $A(\Gamma_{0})$ is embedded in the mapping class group $\mathrm{Mod}(N)$. 
\end{corollary}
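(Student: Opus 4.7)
The plan is to simply compose the three results cited just before the corollary. First I would invoke Lemma~\ref{embedding_Gamma_{1}_into_two-sided_curve_graphs} to produce, for each of the three surfaces $N \in \{N_{1,6}, N_{3,3}, N_{5,0}\}$, an embedding $\Gamma_{1} \leq \mathcal{C}^{\mathrm{two}}(N)$ as a full subgraph (the explicit curves are already drawn in Figure~\ref{fig_three_nonori_surfaces_v1}, so no additional verification is needed).

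Next I would feed this full subgraph into Theorem~\ref{condition_raag_in_nonorimcg}, which immediately yields an injective homomorphism $\psi \colon A(\Gamma_{1}) \hookrightarrow \mathrm{Mod}(N)$ (concretely, realized by suitable powers of Dehn twists along the eight curves $a,b,c,d,e,f,g,h$).

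Finally, I would apply Lemma~\ref{first_lemma_of_third_thm_Kim-Koberda16}, which provides the injective homomorphism $\varphi \colon A(\Gamma_{0}) \hookrightarrow A(\Gamma_{1})$ sending $q \mapsto ef$ and fixing the remaining generators. The composition $\psi \circ \varphi \colon A(\Gamma_{0}) \hookrightarrow \mathrm{Mod}(N)$ is then the desired embedding, completing the proof.

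There is no real obstacle here: the corollary is a formal consequence of results that have already been established. The only point worth flagging is that one has chosen $\Gamma_{1}$ precisely so that the Kim--Koberda trick of replacing $q$ by the non-commuting product $ef$ lands inside a right-angled Artin group that is known to sit inside $\mathrm{Mod}(N)$ by Theorem~\ref{condition_raag_in_nonorimcg}; that $\Gamma_{0} \not\leq \mathcal{C}^{\mathrm{two}}(N)$ (needed for Theorem~\ref{opposite_is_not_true}) is a separate claim and not part of this corollary's proof.
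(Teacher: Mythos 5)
Your proposal is correct and is exactly the paper's argument: the authors obtain the corollary by combining Lemma~\ref{embedding_Gamma_{1}_into_two-sided_curve_graphs} (to get $\Gamma_1 \leq \mathcal{C}^{\mathrm{two}}(N)$), Theorem~\ref{condition_raag_in_nonorimcg} (to get $A(\Gamma_1) \leq \mathrm{Mod}(N)$), and Lemma~\ref{first_lemma_of_third_thm_Kim-Koberda16} (to get $A(\Gamma_0) \hookrightarrow A(\Gamma_1)$), then composing. Your closing remark correctly identifies that the non-embeddability of $\Gamma_0$ in $\mathcal{C}^{\mathrm{two}}(N)$ is handled separately in Lemma~\ref{impossibility_to_be_subgraph} and is not part of this corollary.
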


From now on we prove the rest of Theorem~\ref{opposite_is_not_true}, that is, for $N=N_{1, 6},~N_{3, 3},~N_{5, 0}$ the graph $\Gamma_{0}$ cannot be embedded in $\mathcal{C}^{\mathrm{two}}(N)$. 

Let $N$ be a closed nonorientable surface with $\xi^{\mathrm{two}}(N)=4$.
Suppose \{$a$, $b$, $c$, $d$\} are two-sided curves on $N$ which form a four cycle $C_{4}$ in $\mathcal{C}^{\mathrm{two}}(N)$ with this order.
Let $F_{1}$ be the regular neighborhood of $a$ and $c$ in $N$, and $F_{2}$ the regular neighborhood of $b$ and $d$ in $N$ so that $F_{1}\cap F_{2}=\emptyset$. 
Set $F_{0}=\overline{\partial N-(F_{1}\cup F_{2})}$. 
We remark that $F_{0}$, $F_{1}$, and $F_{2}$ can be orientable surfaces. 
We regard the boundaries of $F_{0}$, $F_{1}$, and $F_{2}$ as marked points from now. 

\begin{lemma}\label{six_cases_of_the_two-sided_complexity_four_nonorientable_surfaces}
The triple $(F_{0}, F_{1}, F_{2})$ satisfies exactly one of the following six cases, possibly after switching the roles of $F_{1}$ and $F_{2}$, where the following $S^{1}$ means a circle in $N$.

\begin{itemize}
\item[{\rm (1)}] $F_{0}\approx S_{0,2}$, and $F_{0}\cap F_{1}\approx S^{1}$ and $F_{0}\cap F_{2}\approx S^{1}$, and we choose appropriate pairs $(F_{1}, F_{2})$ from $F_{1}\in\{N_{1,3}, N_{2,1}, S_{0,4}, S_{1,1}\}$, $F_{2}\in\{N_{1,4}, N_{2,2}, S_{0,5}, S_{1,2}\}$.

\item[{\rm (2)}] $F_{0}\in\{N_{1,2}, S_{0,3}\}$,  and $F_{0}\cap F_{1}\approx S^{1}$ and $F_{0}\cap F_{2}\approx S^{1}$, and we choose appropriate pairs $(F_{1}, F_{2})$ from $F_{1}, F_{2}\in\{N_{1,3}, N_{2,1}, S_{0,4}, S_{1,1}\}$.

\item[{\rm (3)}] $F_{0}\approx S_{0,2}\coprod S_{0,2}$, and $F_{0}\cap F_{1}\approx S^{1}\coprod S^{1}$ and $F_{0}\cap F_{2}\approx S^{1}$, and we choose appropriate pairs $(F_{1}, F_{2})$ from $F_{1}, F_{2}\in\{N_{1,3}, S_{0,4}\}$.

\item[{\rm (4)}] $F_{0}\approx S_{0,2}\coprod S_{0,2}$, and $F_{0}\cap F_{1}\approx S^{1}\coprod S^{1}\coprod S^{1}$ and $F_{0}\cap F_{2}\approx S^{1}$, and we choose appropriate pairs $(F_{1}, F_{2})$ from $F_{1}\in\{N_{1,3}, S_{0,4}\}$, $F_{2}\in\{N_{1,3}, N_{2,1}, S_{0,4}, S_{1,1}\}$.

\item[{\rm (5)}] $F_{0}\approx S_{0,2}\coprod S_{0,3}$, and $S_{0,2}\cap F_{1}\approx S^{1}$, $S_{0,2}\cap F_{2}\approx S^{1}$, $S_{0,3}\cap F_{1}\approx S^{1}$, $S_{0,3}\cap F_{2}=\emptyset$, and we choose appropriate pairs $(F_{1}, F_{2})$ from $F_{1}\in\{N_{1,3}, S_{0,4}\}$, $F_{2}\in\{N_{1,3}, N_{2,1}, S_{0,4}, S_{1,1}\}$.

\item[{\rm (6)}] $F_{0}\approx S_{0,2}\coprod N_{1,2}$, and $S_{0,2}\cap F_{1}\approx S^{1}$, $S_{0,2}\cap F_{2}\approx S^{1}$, $N_{1,2}\cap F_{1}\approx S^{1}$, $N_{1,2}\cap F_{2}=\emptyset$, and we choose appropriate pairs $(F_{1}, F_{2})$ from $F_{1}\in\{N_{1,3}, S_{0,4}\}$, $F_{2}\in\{N_{1,3}, N_{2,1}, S_{0,4}, S_{1,1}\}$.
\end{itemize}
\end{lemma}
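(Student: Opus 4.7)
The plan is to enumerate admissible pairs $(F_{1}, F_{2})$ of filled subsurfaces and, for each pair, determine $F_{0}$ via Euler-characteristic and gluing bookkeeping. Because $\{a,b,c,d\}$ spans a full $C_{4}$ in $\mathcal{C}^{\mathrm{two}}(N)$, the non-adjacent pairs $\{a,c\}$ and $\{b,d\}$ intersect nontrivially, so $i(a,c),i(b,d)\geq 1$. A regular neighborhood of a filling pair of two-sided simple closed curves has Euler characteristic equal to minus the intersection number, so $\chi(F_{1})=-i(a,c)$ and $\chi(F_{2})=-i(b,d)$. The global constraint $\chi(N)=\chi(F_{0})+\chi(F_{1})+\chi(F_{2})$ (with marked points treated as punctures), combined with the complexity bound $\xi^{\mathrm{two}}(N)=4$, caps both intersection numbers at $2$ and controls the total number of marked points lying in each piece.

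The first step is to list the possible topological types of $F_{1}$, where the outer boundary of the neighborhood provides the boundary circles and interior marked points of $N$ contribute to the ``$m$'' in the notation $X_{g,m}$. For $i(a,c)=1$ the fill has $\chi=-1$ and a single outer boundary circle, giving $F_{1}\approx S_{1,1+k}$ or $F_{1}\approx N_{2,1+k}$ depending on orientability, with $k$ the number of marked points of $N$ sitting inside $F_{1}$. For $i(a,c)=2$ the fill has $\chi=-2$, with two boundary circles ($S_{1,2+k}$ or $N_{2,2+k}$), three boundary circles ($N_{1,3+k}$), or four boundary circles ($S_{0,4+k}$). I would produce the analogous list for $F_{2}$ and then match orientability and marked-point totals against the ambient $N$.

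The second step is to run through admissible pairs $(F_{1},F_{2})$ and determine $F_{0}$. Its Euler characteristic is forced by $\chi(F_{0})=\chi(N)-\chi(F_{1})-\chi(F_{2})$; its boundary satisfies $\partial F_{0}=\partial F_{1}\sqcup\partial F_{2}$ inside $N$; and any two-sided multicurve in $F_{0}$, together with the boundary circles bordering $F_{1}$ and $F_{2}$ and interior multicurves in $F_{1}, F_{2}$, extends to a two-sided multicurve of $N$, bounding $\xi^{\mathrm{two}}(F_{0})$ sharply. The surface classification then forces every connected component of $F_{0}$ to be one of $S_{0,2}$, $S_{0,3}$, or $N_{1,2}$. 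For each surviving $(F_{1},F_{2})$ I would enumerate the possible $F_{0}$ together with the incidence pattern of $\partial F_{0}$ with $\partial F_{1}$ and $\partial F_{2}$, ruling out combinations that violate connectedness of $N$, the marked-point count of $N$, or the complexity bound.

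The main obstacle is the sheer volume of bookkeeping. A large table of candidate $(F_{1}, F_{2})$ must be compiled, for each one the Euler characteristic, boundary count, orientability and marked-point distribution of $F_{0}$ must be recorded, and cross-checks against $\xi^{\mathrm{two}}(N)=4$ and the topology of $N$ performed. The individual calculations are elementary, but ensuring the enumeration is exhaustive and non-redundant requires care. After collapsing by the symmetry swapping $F_{1}\leftrightarrow F_{2}$, the surviving triples organize into exactly the six cases listed.
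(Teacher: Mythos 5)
Your proposal and the paper's proof are both case enumerations driven by an additive invariant of the decomposition $N=F_{0}\cup F_{1}\cup F_{2}$, but you choose the Euler characteristic where the paper uses the complexity: setting $\zeta(F_{i})=\xi^{\mathrm{two}}(F_{i})$ or $\xi(F_{i})$ according to orientability, the paper's entire argument runs on the identity $\xi^{\mathrm{two}}(N)=\zeta(F_{1})+\zeta(F_{2})+\zeta(F_{0})+\alpha$, where $\alpha\geq 1$ counts isotopy classes of gluing circles; since each of $F_{1},F_{2}$ carries an essential two-sided curve this forces $2\leq\zeta(F_{1})+\zeta(F_{2})\leq 3$, and the six cases fall out of that small case analysis. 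The difference is not cosmetic, because your substitute for this bound fails.

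Concretely, the claim that the Euler-characteristic identity together with $\xi^{\mathrm{two}}(N)=4$ ``caps both intersection numbers at $2$'' is false, and the enumeration built on it is incomplete. For $N=N_{1,6}$ one has $\chi(N)=-5$, so $\chi(F_{0})+\chi(F_{1})+\chi(F_{2})=-5$ with $\chi(F_{0})\leq 0$ only yields $i(a,c)+i(b,d)\leq 5$; nothing you state excludes $i(b,d)=3$. Indeed, Case~(1) of the lemma you are trying to prove lists $F_{2}\in\{N_{1,4},S_{0,5}\}$, and both of these have $\chi=-3$, i.e.\ arise only as regular neighborhoods of pairs meeting three times --- your candidate lists for $F_{1},F_{2}$, compiled from $i\in\{1,2\}$ only, omit them (they also omit the $\chi=-2$, one-boundary-component type $N_{3,1}$). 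To repair this you must promote the complexity additivity --- which you invoke only in your second step, and only to control $F_{0}$ --- to the primary bound on $F_{1}$ and $F_{2}$: it caps $\zeta(F_{2})$ at $2$, not $i(b,d)$, and a surface with $\zeta=2$ can have $\chi=-3$. (If you instead want to keep the intersection-number bookkeeping, you would need a separate argument, e.g.\ a mod-$2$ homology parity argument, to decide which $\chi=-3$ types are realizable; the lemma's hedge ``appropriate pairs'' leaves that open.) A secondary slip: $F_{1}$ and $F_{2}$ are regular neighborhoods of curves, so they contain no marked points of $N$ and your parameter $k$ is always $0$; allowing $k>0$ introduces spurious types and, worse, misplaces the marked points of $N$, all of which must lie in $F_{0}$.
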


\begin{proof}[Proof of Lemma~\ref{six_cases_of_the_two-sided_complexity_four_nonorientable_surfaces}]
Let $N$ be a compact connected nonorientable surface with $\xi^{\mathrm{two}}(N)=4$. 
We define $\alpha$ to be the number of the free isotopy classes of the boundary components of $F_{0}$ that are contained in $F_{1}\cup F_{2}$. 
We have $\alpha>0$, since $N$ is connected and $F_{1}\cap F_{2}=\emptyset$. 
For $i=0,1,2$, we set
$$ \zeta(F_{i})\coloneqq
    \begin{cases}
        {\xi^{\mathrm{two}}(F_{i}) \hspace{5mm}(\mathrm{if}~F_{i}~\mathrm{is~a~nonorientable~surface})},\\
        {\xi(F_{i}) \hspace{5mm}(\mathrm{if}~F_{i}~\mathrm{is~a~orientable~surface})}.\\      
    \end{cases}
$$
Then $\xi^{\mathrm{two}}(N)=\zeta(F_{1})+\zeta(F_{2})+\zeta(F_{0})+\alpha$. 
Since $F_{i}$ ($i=1, 2$) contains at least one two-sided curve, we have $2\leq \zeta(F_{1})+\zeta(F_{2})$.
Moreover, $\zeta(F_{1})+\zeta(F_{2})=4-\zeta(F_{0})-\alpha\leq 4-0-1=3$.
Therefore it follows that $2\leq \zeta(F_{1})+\zeta(F_{2})\leq 4$.

Suppose that $\zeta(F_{1})+\zeta(F_{2})=3$. 
Without loss of generality we may assume $\zeta(F_{1})=1$ and $\zeta(F_{2})=2$. 
Then we have $\zeta(F_{0})+\alpha=1$. 
If $\zeta(F_{1})=1$ and $\zeta(F_{2})=2$, then $F_{1}\in\{N_{1,3}, N_{2,1}, S_{0, 4}, S_{1,1}\}$ and $F_{2}\in\{N_{1,4}, N_{2,2}, S_{0, 5}, S_{1, 2} \}$. 
By the assumption that $\alpha\geq 1$, we have $\alpha=1$ and $\xi(F_{0})=0$. 
Since $F_{0}$ has at least two boundary components, $F_{0}\in\{N_{1,2}, S_{0, 2}, S_{0, 3} \}$. 
If $F_{0}\approx N_{1,2}$ or $S_{0, 3}$, then this contradicts the assumption that $\alpha=1$. 
Hence, we have $F_{0}\approx S_{0, 2}$. 
Case (1) is immediate. 

We suppose that $\zeta(F_{1})+\zeta(F_{2})=2$.
Then we have $\zeta(F_{0})+\alpha=2$ and $\zeta(F_{1})=\zeta(F_{2})=1$.
If $\alpha=1$, then $\zeta(F_{0})=1$.
If $F_{0}$ is connected, then $F_{0}\in\{N_{1,2}, S_{0, 2}, S_{0, 3} \}$.
Since $\alpha=1$, it follows that $F_{0}\approx S_{0,2}$, but this contradicts the fact that $\zeta(F_{0})=1$. 
If $F_{0}$ is not connected, then it also contradicts the fact that $\alpha=1$. 

So we have $\alpha=2$ and $\zeta(F_{0})=0$. 
If $F_{0}$ is connected, then $\alpha=2$ implies that $F_{0}\approx N_{1,2}$ or $S_{0, 3}$, and $F_{0}$ intersects both $F_{1}$ and $F_{2}$. 
By the assumption that $\zeta(F_{1})=\zeta(F_{2})=1$, we have $F_{1}, F_{2} \in \{ N_{1,3}, N_{2,1}, S_{0,4}, S_{1,1} \}$. 
Hence Case (2) follows. 

We assume that $F_{0}$ is not connected.
By the assumption that $\alpha=2$, the number of connected components of $S_{0}$ is exactly two, and the components of $F_{0}$ which intersects both $F_{1}$ and $F_{2}$ must be $S_{0,2}$. 
We call $A$ and $B$ for the two components of $F_{0}$, respectively. 
We have $S_{0}\approx S_{0, 2}\coprod S_{0, 2}$ or $S_{0}\approx S_{0, 2}\coprod S_{0, 3}$. 
At least one component of $F_{0}$ must intersect both $F_{1}$ and $F_{2}$, and without loss of generality we assume that $A$ intersects both $F_{1}$ and $F_{2}$, and so $A\approx S_{0,2}$. 

If $B$ intersects both $F_{1}$ and $F_{2}$, then $B\approx S_{0,2}$. 
In this case $F_{1}$ and $F_{2}$ need have at least two boundary components, and so $F_{1}, F_{2}\in\{N_{1,3}, S_{0,4}\}$.
We obtain Case (3) (see Figure~\ref{fig_case_3_4}).

If $B$ is disjoint from either $F_{1}$ or $F_{2}$ (without loss of generality we may assume that $B$ is disjoint from $F_2$), then $B\in\{S_{0,2}, S_{0,3}, N_{1,2}\}$, because $\alpha=2$.
In the case where $B\approx S_{0,2}$, both of the boundary components of $B$ should intersect $F_{1}$. 
Case (4) is immediate, because $F_{1}$ should have at least three boundary components (see Figure~\ref{fig_case_3_4}). 
In the case where $B\approx S_{0,3}$, it follows that $B\cap F_{1}\approx S^{1}$, because $\alpha=2$.
Hence Case (5) follows from the same reason as Case (4). 
In the case where $B\approx N_{1,2}$, by the same argument as Case (5) we obtain Case (6). 
\end{proof}

\begin{figure}[h]
\begin{center}
\includegraphics[scale=0.37]{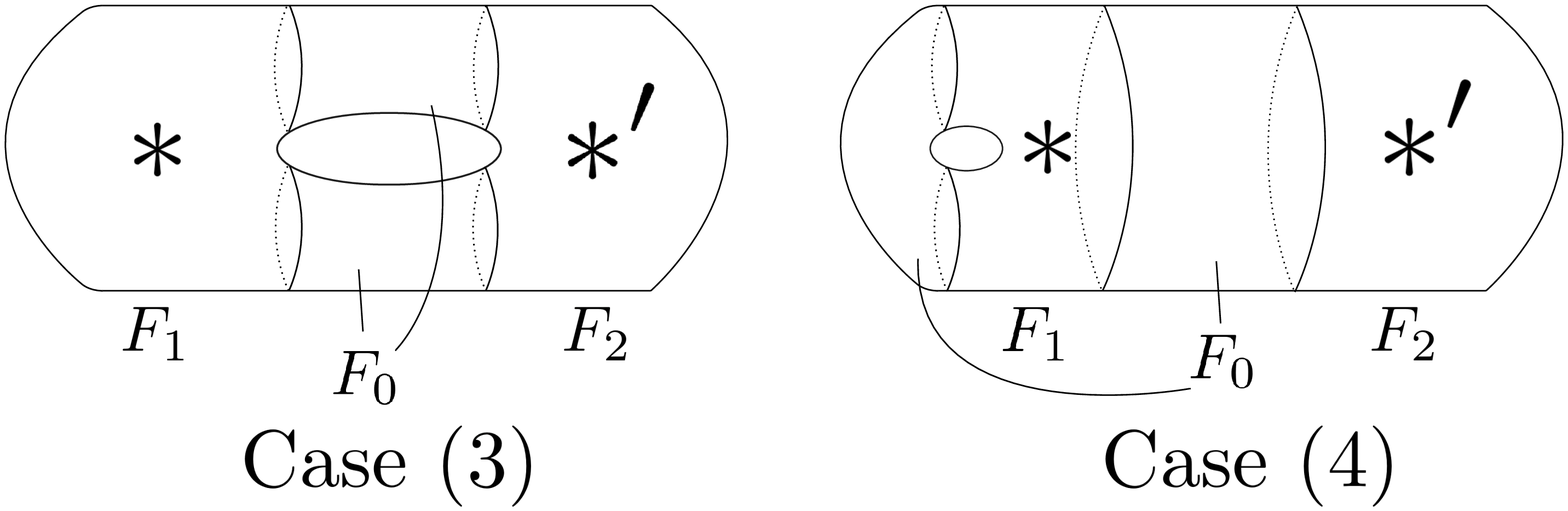}
\caption{The nonorientable surfaces of Cases (3) and (4), where $*$ and $*'$ are one of the cases of $F_{1}$ and $F_{2}$, respectively. \label{fig_case_3_4}}
\end{center}
\end{figure}

\begin{lemma}\label{impossibility_to_be_subgraph}
For $N=N_{1, 6},~N_{3, 3},~N_{5, 0}$, the graph $\Gamma_{0}$ is not embedded into $\mathcal{C}^{\mathrm{two}}(N)$ as a full subgraph.
\end{lemma}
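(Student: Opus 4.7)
The plan is to argue by contradiction: assume that $\Gamma_{0}$ embeds as a full subgraph of $\mathcal{C}^{\mathrm{two}}(N)$ for some $N \in \{N_{1,6}, N_{3,3}, N_{5,0}\}$, and derive an impossibility from the structure of the 4-cycle $C_{4}$ on $\{a,b,c,d\}$ together with the adjacency pattern of the remaining vertices of $\Gamma_{0}$ (in particular the vertex $q$, whose presence distinguishes $\Gamma_{0}$ from $\Gamma_{1}$). Since $\xi^{\mathrm{two}}(N)=4$ in each of the three cases, realizing $\{a,b,c,d\}$ as pairwise disjoint two-sided curves with the $C_{4}$ adjacency allows us to invoke Lemma~\ref{six_cases_of_the_two-sided_complexity_four_nonorientable_surfaces} and obtain the decomposition $(F_{0}, F_{1}, F_{2})$, so that $a,c \subset F_{1}$ and $b,d \subset F_{2}$. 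The first step is to restrict the topological type of $(F_{0},F_{1},F_{2})$ by imposing the further constraint that the whole surface is one of the three listed, which will cut the six cases down to a shorter list of admissible decompositions.

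Next, I would translate the graph-theoretic adjacencies in $\Gamma_{0}$ into geometric constraints on the remaining vertices. For each vertex $v \in V(\Gamma_{0}) \setminus \{a,b,c,d\}$, the set of vertices in $\{a,b,c,d\}$ adjacent to $v$ forces the realizing curve $\gamma_{v}$ to be disjoint from the corresponding curves, hence to be supported on a specific union of pieces of the decomposition $F_{0}\cup F_{1}\cup F_{2}$. Since each realizing curve must be two-sided and essential, and since the complexities $\zeta(F_{i})$ of the pieces are controlled by Lemma~\ref{six_cases_of_the_two-sided_complexity_four_nonorientable_surfaces}, I would check that only a small, explicit collection of curves is available in each piece. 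This reduces the problem to a finite combinatorial question about how the non-$C_{4}$ vertices of $\Gamma_{0}$ can be accommodated in the available pieces while respecting all adjacency and non-adjacency requirements.

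The heart of the argument is then to focus on $q$. Because $\varphi(q)=ef$ with $e$ and $f$ commuting, the vertex $q$ in $\Gamma_{0}$ inherits a ``double'' role whose adjacencies with $\{a,b,c,d\}$ and with the other vertices are precisely those common to $e$ and $f$. I would show that in each admissible case from Lemma~\ref{six_cases_of_the_two-sided_complexity_four_nonorientable_surfaces}, a \emph{single} two-sided curve with the required disjointness/intersection pattern cannot be drawn: either the relevant piece of $F_{0}\cup F_{1}\cup F_{2}$ is too simple to contain such a curve (e.g.\ has $\zeta = 0$), or any candidate curve would bound a bigon with one of the already-placed curves, violating minimal position. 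Combining these local obstructions across all cases gives the contradiction.

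The main obstacle is the case analysis: there are six cases in Lemma~\ref{six_cases_of_the_two-sided_complexity_four_nonorientable_surfaces}, each with several sub-choices for $F_{1}$ and $F_{2}$, and the restriction $N\in\{N_{1,6},N_{3,3},N_{5,0}\}$ must be used case by case to rule out all possibilities. A second subtle point is that one must rule out the existence of the required curve for $q$ as a \emph{two-sided} essential curve (not merely as some simple closed curve), since the one-sided companions are invisible in $\mathcal{C}^{\mathrm{two}}(N)$; verifying this will require a careful piece-by-piece inspection rather than purely a complexity count.
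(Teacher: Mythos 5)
Your skeleton matches the paper's: assume $\Gamma_0 \leq \mathcal{C}^{\mathrm{two}}(N)$, realize the $4$-cycle on $\{a,b,c,d\}$, invoke Lemma~\ref{six_cases_of_the_two-sided_complexity_four_nonorientable_surfaces} to obtain the decomposition $(F_0,F_1,F_2)$, and confine the remaining curves to pieces of that decomposition. However, the mechanism you propose for the contradiction is not the right one, and the actual mechanism is absent from your plan. You claim that the curve realizing $q$ ``cannot be drawn'' because the relevant piece has $\zeta=0$ or because a candidate would bound a bigon. That is false as stated: since $q$ is disjoint from $a,b,c,d$ it can be isotoped into $F_0$, and the components of $F_0$ ($S_{0,2}$, $S_{0,3}$ or $N_{1,2}$) do contain two-sided curves that are essential in $N$, namely the boundary-parallel ones, isotopic to components of $F_0\cap F_1$ or $F_0\cap F_2$. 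So $q$ can always be drawn; what fails are the required \emph{intersections}. Because $\zeta(F_0)=0$ forces $q$ to be isotopic to a component of $\partial F_1$ or $\partial F_2$, the curve $q$ ends up disjoint from $g$ (which satisfies $g\cap F_1=\emptyset$) or from $h$ (which satisfies $h\cap F_2=\emptyset$), contradicting the non-adjacency of $q$ with $g$ and $h$. Moreover, in five of the six cases the paper's contradiction does not involve $q$ at all: it comes from the pair $g,h$. Since $g\subseteq F_0\cup F_2$ and $h\subseteq F_0\cup F_1$, they can only meet in a component of $F_0$ joining $F_1$ to $F_2$, and in Cases (1), (3)--(6) every such component is an annulus, where the arcs of $g$ (returning to the $F_2$-side boundary) and of $h$ (returning to the $F_1$-side boundary) can be isotoped apart; hence $g\cap h=\emptyset$ in minimal position, contradicting their non-adjacency in $\Gamma_0$. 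Your proposal never mentions the constraint $g\cap h\neq\emptyset$, which is the engine of the argument, so as written it stalls exactly where the proof has to do its work.

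Two secondary points. First, whittling the six cases down using $N\in\{N_{1,6},N_{3,3},N_{5,0}\}$ is unnecessary overhead: the paper rules out all six cases uniformly and in fact proves the stronger statement that $\Gamma_0\not\leq\mathcal{C}^{\mathrm{two}}(N)$ whenever $\xi^{\mathrm{two}}(N)=4$. Second, the factorization $\varphi(q)=ef$ is irrelevant to this lemma; it belongs to the injectivity statement (Lemma~\ref{first_lemma_of_third_thm_Kim-Koberda16}). All that matters here is the adjacency pattern of $q$ inside $\Gamma_0$: adjacent to all of $a,b,c,d$ and non-adjacent to $g$ and $h$.
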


\begin{proof}[Proof of Lemma~\ref{impossibility_to_be_subgraph}]
The strategy of the proof is to show that, for each case in Theorem~\ref{six_cases_of_the_two-sided_complexity_four_nonorientable_surfaces}, there is no systems of two-sided curves which realize 
$\Gamma_{0}$. 

We suppose that $\Gamma_{0}\leq \mathcal{C}^{\mathrm{two}}(N)$. 
We regard $a$, $b$, $c$, $d$, $g$, $h$, and $q$ as two-sided curves on $N$. 
From $C_{4}\leq \Gamma_{0}$, we have one of the six cases in Lemma~\ref{six_cases_of_the_two-sided_complexity_four_nonorientable_surfaces}. 
By the definition of two-sided curve graphs, each of the intersections $q\cap g$, $q\cap h$, $g\cap F_{2}$, $h\cap F_{1}$, and $g\cap h$ is not empty. 
Moreover $\partial q\subseteq F_{0}$ and $g\cap F_{1}=h\cap F_{2}=\emptyset$. 

In Cases (1), (3), (4), (5), and (6), $g$ and $h$ intersect on a component of $F_{0}$ which connects $F_{1}$ and $F_{2}$ and is homeomorphic to $S_{0, 2}$ since $g\cap F_{1}=\emptyset$ and $h\cap F_{2}=\emptyset$. 
This implies that $g\subseteq F_{2}$ and $h\subseteq F_{1}$, and so $g\cap h=\emptyset$. 
This is a contradiction. 

In Case (2), since $q\subseteq F_{0}\approx S_{0, 3}$ or $N_{1,2}$, we have $q=F_{0}\cap F_{1}$ or $q=F_{0}\cap F_{2}$ (two-sided curves on $S_{0,3}$ or $N_{1,2}$ cannot be essential). 
Without loss of generality, we may assume $q=F_{0}\cap F_{1}$. 
The curve $q$ is a separating curve which separates $F_{1}$ from $N$. 
By the fact that $g\cap F_{1}=\emptyset$, we have $g\cap q=\emptyset$. 
This contradicts the fact that $g\cap q\not=\emptyset$.

Therefore it follows that $\Gamma_{0}\not\leq \mathcal{C}^{\mathrm{two}}(N)$ with $\xi^{\mathrm{two}}(N)=4$. 
\end{proof}

Corollary~\ref{possibility_to_be_subgroup} together with Lemma~\ref{impossibility_to_be_subgraph} competes the proof of Theorem~\ref{opposite_is_not_true}. 

\section{Proofs of Theorems \ref{low_genus} and \ref{n-thick_stars} \label{final}}

In this section we give proofs for Theorems \ref{low_genus} and \ref{n-thick_stars}. 

Let $\iota \colon \mathrm{Mod}(N) \hookrightarrow \mathrm{Mod}(S)$ be an embedding given in Lemma \ref{ori_cov_mcg_inj}. 
According to Thurston~\cite{Thurston88}, the Nielsen-Thurston classification on $\mathrm{Mod}(S)$ can be extended to $\mathrm{Mod}(N)$. 
In fact Wu~\cite{Wu87} proved that a mapping class $h\in\mathrm{Mod}(N)$ is of {\it finite order} (resp.\ {\it reducible}, {\it pseudo-Anosov}) if $\iota(h)\in\mathrm{Mod}(S)$ is of {\it finite order} (resp.\ {\it reducible}, {\it pseudo-Anosov}). 

\begin{lemma} \label{anosov_prod_nonori}
Let $N$ be a nonorientable surface with negative Euler characteristic and $f$ be a pseudo-Anosov element of $N$. 
For every finite graph $\Gamma \leq \mathcal{C}^{two}(N)$, there exists a number $n$ such that the $n$-th powers of Dehn twists along the closed curves corresponding to $\Gamma$ and $f$ generate a subgroup isomorphic to $A(\Gamma) * \mathbb{Z}$ in ${\rm Mod}(N)$. 
\end{lemma}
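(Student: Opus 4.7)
The plan is to extend the strategy used in the proof of Theorem~\ref{condition_raag_in_nonorimcg} by adjoining an extra ``isolated'' vertex that records the pseudo-Anosov~$f$. Let $\pi\colon S \to N$ be the orientation double cover with deck transformation $J$, and let $\iota\colon \mathrm{Mod}(N) \hookrightarrow \mathrm{Mod}(S)$ be the embedding from Lemma~\ref{ori_cov_mcg_inj}. By Wu's extension of the Nielsen--Thurston classification recalled above, $\iota(f)$ is pseudo-Anosov on $S$. Let $\tilde{\Gamma} \leq \mathcal{C}(S)$ be the finite subgraph obtained from the lifts of the curves in $\Gamma$, as in the proof of Theorem~\ref{condition_raag_in_nonorimcg}, with covering map $p \colon \tilde{\Gamma} \to \Gamma$ satisfying condition~$(*)$ by Lemma~\ref{cov_raag_inj}.

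First I would form the extended graphs $\Gamma' = \Gamma \sqcup \{*\}$ and $\tilde{\Gamma}' = \tilde{\Gamma} \sqcup \{*\}$, where $*$ is isolated in both, and extend $p$ to $p' \colon \tilde{\Gamma}' \to \Gamma'$ by $p'(*) = *$. Since $*$ is isolated on both sides, $p'$ remains a surjective full map satisfying condition~$(*)$, so Lemma~\ref{co_hom_inj} produces an injective diagonal homomorphism
\[
\varphi' \colon A(\Gamma) * \mathbb{Z} = A(\Gamma') \hookrightarrow A(\tilde{\Gamma}') = A(\tilde{\Gamma}) * \mathbb{Z}
\]
extending the diagonal homomorphism $\varphi$ of Theorem~\ref{condition_raag_in_nonorimcg} by sending the extra generator to~$*$. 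Next I would apply Koberda's theorem to the collection consisting of $\iota(f)$ together with the Dehn twists $T_{\gamma}$ for $\gamma \in V(\tilde{\Gamma})$. Since $\iota(f)$ is pseudo-Anosov on all of $S$, it fails to commute with any Dehn twist along a simple closed curve, so the commutation graph of this collection is exactly $\tilde{\Gamma} \sqcup \{*\}$. For $n$ sufficiently large, Koberda's theorem then yields an embedding $\psi' \colon A(\tilde{\Gamma}) * \mathbb{Z} \hookrightarrow \mathrm{Mod}(S)$ sending each $\gamma \in V(\tilde{\Gamma})$ to $T_{\gamma}^n$ and $*$ to $\iota(f)^n$.

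Finally, the composition $\psi' \circ \varphi'$ is an embedding $A(\Gamma) * \mathbb{Z} \hookrightarrow \mathrm{Mod}(S)$ whose image lies inside $\iota(\mathrm{Mod}(N))$: for each $v \in V(\Gamma)$, the corresponding image is a product of Dehn twists over the $J$-orbit $p^{-1}(v)$ and hence is $J$-invariant, and equals $\iota(T_v^n)$ once one identifies the lift of the Dehn twist along a two-sided curve with the product of twists along its two disjoint lifts; the generator $*$ is sent to $\iota(f^n)$, which is $J$-invariant as well. Pulling back through $\iota$ yields the desired subgroup in $\mathrm{Mod}(N)$, generated precisely by the $n$-th powers $T_{c_1}^n, \ldots, T_{c_k}^n$ along with $f^n$. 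The main obstacle I anticipate is bookkeeping rather than conceptual: verifying that Koberda's theorem can be invoked uniformly for a mixed family of Dehn twists and a top-dimensional pseudo-Anosov with a single exponent~$n$, and confirming the identification $\iota(T_v^n) = T_{\tilde v_1}^n T_{\tilde v_2}^n$ so that the elements one recovers in $\mathrm{Mod}(N)$ are genuinely the powers of the original $T_{c_i}$ and not merely $J$-invariant products of twists that happen to satisfy the correct commutations.
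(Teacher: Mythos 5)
Your proposal is correct and follows essentially the same route as the paper: lift $\Gamma$ to $\tilde{\Gamma}$ in the orientation double cover, invoke Koberda's theorem for the family of lifted twists together with the pseudo-Anosov $\iota(f)$ to embed $A(\tilde{\Gamma}) * \mathbb{Z}$ into $\mathrm{Mod}(S)$, and restrict to the diagonal copy of $A(\Gamma)$ together with $f$. The paper states this more tersely (it does not spell out the auxiliary isolated vertex or the non-commutation of a pseudo-Anosov with Dehn twists), and the sign bookkeeping you flag at the end is real but harmless, since the lift of $t_c$ is a multi-twist $t_{\tilde{c}_1}^{}t_{\tilde{c}_2}^{-1}$ with mixed handedness and the diagonal-embedding argument of Lemma \ref{co_hom_inj} is insensitive to inverting generators.
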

\begin{proof}[Proof of Lemma \ref{anosov_prod_nonori}]
As in Section \ref{Proof_of_theorem_condition_raag_in_nonorimcg}, by $\tilde{\Gamma}$ we denote the subgraph of $\mathcal{C}(S)$ induced by a union $L$ of the sets of lifts of curves corresponding to $\Gamma$. 
Since $f$ is pseudo-Anosov in the orientation double cover $S$, Koberda's theorem \cite[Theorem 1.1]{Koberda12} implies that, for a sufficiently large $n$, the $n$-th powers of Dehn twists along curves in $L$ and $f$ induces an embedding $\psi \colon A(\tilde{\Gamma}) * \mathbb{Z} \hookrightarrow \mathrm{Mod}(S)$. 
As in the proof of Theorem \ref{condition_raag_in_nonorimcg}, we obtain an embedding $A(\Gamma) \hookrightarrow A(\tilde{\Gamma})$ so that the image is contained in $\mathrm{Mod}(N)$. 
Hence, by restricting $\psi$ to the subgroup generated by $A(\Gamma)$ and $f$, we have an embedding of $A(\Gamma) * \mathbb{Z}$ into $\mathrm{Mod}(N)$. 
\end{proof} 

The {\it support} of a homeomorphism $\varphi$ of $N$ is defined by 
\begin{align*}
{\rm supp}(\varphi)=\overline{\{p\in N\mid \varphi(p)\not=p\}}. 
\end{align*}

\begin{definition}\label{standard_embeding}
An injective homomorphism $f$ from $A(\Gamma)$ to ${\rm Mod}(N)$ is {\it standard} if $f$ satisfies the following two conditions. 
\begin{itemize}
\item[(i)] The map $f$ sends each generator $v$ of $A(\Gamma)$ to a non-trivial multi-twist, 
\item[(ii)] for two distinct generators $u$ and $v$ of $A(\Gamma)$, the support of $f(u)$ is not contained in the support of $f(v)$.
\end{itemize}
\end{definition}

The concept of standard embeddings of right-angled Artin groups into the mapping class groups of orientable surfaces was introduced by Kim--Koberda (see \cite[Lemma 2.3]{Kim--Koberda14} and \cite[Definition 10]{Kim-Koberda16}). 

\begin{remark}\label{remark_of_standard_embedding}
In Definition~\ref{standard_embeding}, if supp($f(v)$) induces a maximal clique in $\mathcal{C}^{\mathrm{two}}(N)$, then the condition (ii) implies that the link of $v$ is empty. 
\end{remark}

\begin{proof}[Proof of Theorem~\ref{low_genus}]
Assume that $\xi^{\mathrm{two}}(N)=0$, that is, $N=N_{1,0},~N_{1,1},~N_{1,2}$. 
In this case, the mapping class groups $\mathrm{Mod}(N)$ are all finite. 
Hence, if $A(\Gamma)\leq\mathrm{Mod}(N)$, then it follows that $A(\Gamma)=1$ because the subgroups of finite groups are finite and $A(\Gamma)$ is necessarily infinite excepting the trivial group. 
It means that $\Gamma=\emptyset$, and so $\Gamma\leq\mathcal{C}^{\mathrm{two}}(N)$. 

We next assume that $\xi^{\mathrm{two}}(N)=1$, that is, $N=N_{1,3},~N_{2,0},~N_{2,1},~N_{3,0}$. 
For $N=N_{2,0}$, the mapping class group $\mathrm{Mod}(N)$ is finite, and we are done. 
For $N=N_{1,3},~N_{2,1},~N_{3,0}$, any pair of two-sided curves on $N$ intersects non-trivially because of  the fact that $\xi^{\mathrm{two}}(N)=1$.
Therefore, the curve graphs $\mathcal{C}^{\mathrm{two}}$ consist of isolated vertices.
It is sufficient to prove that if $A(\Gamma)\leq\mathrm{Mod}(N)$, then $\Gamma$ has no edges. 
To see this, suppose that $\Gamma$ has an edge. 
Then we have $\mathbb{Z}^{2} \hookrightarrow A(\Gamma)$. 
However, since ${\rm Mod}(N)$ has no subgroup isomorphic to $\mathbb{Z}^{2}$, it holds that $A(\Gamma) \not \leq {\rm Mod}(N)$. 
So we are done. 

We finally assume that $\xi^{\mathrm{two}}(N)=2$, that is, $N=N_{1,4},~N_{2,2},~N_{3,1}$. 
Let $f$ be a standard embedding $f$ from $A(\Gamma)$ to ${\rm Mod}(N)$. 
Note that $\mathcal{C}^{\mathrm{two}}(N)$ is triangle-free. 
In addition, $\mathcal{C}^{\mathrm{two}}(N)$ has an infinite diameter, because $\mathcal{C}^{\mathrm{two}}(N)$ is quasi-isometric to the ordinal curve graph $\mathcal{C}(N)$ (see Masur-Schleimer~\cite[Section 6]{Masur-Schleimer13} for the reason) and $\mathcal{C}(N)$ has an infinite diameter by Bestvina-Fujiwara~\cite[Theorem 5.5]{Bestvina-Fujiwara07}. 
We claim that the conclusion of the theorem holds if and only if it holds for each connected component of $\Gamma$. 
This claim is an easy consequence of Lemma \ref{anosov_prod_nonori} and the fact that there exists a pseudo-Anosov homeomorphism on $N$. 
By the above claim, we may assume that $\Gamma$ is connected, and so it has at least one edge.
Each vertex $v$ of $A(\Gamma)$ is mapped to a power of a single Dehn twist $t_{c}$ along $c$ by $f$, because $\Gamma$ has no isolated vertex and $\mathcal{C}^{\mathrm{two}}(N)$ is triangle-free (see Remark~\ref{remark_of_standard_embedding}). 
Hence we gain a full embedding $\Gamma \rightarrow \mathcal{C}^{\mathrm{two}}(N)$. 
\end{proof}

\begin{proof}[Proof of Theorem~\ref{n-thick_stars}]
\[
\xymatrix@!C{
& \mathrm{Mod}(N) \ar@{-}[d]&\\
& A(\Gamma) \ar@{-}[ld]  \ar@{-}[rd] & \\
\mathbb{Z}^n \cong \langle A \cup C \rangle &  & \langle B \cup C \rangle \cong \mathbb{Z}^n \\
 \mathbb{Z}^n \cong f \langle K, v\rangle \ar@{-}[u]^{\text{finite-index}}& \langle C \rangle \ar@{-}[lu] \ar@{-}[ru]& f \langle L, v\rangle \cong \mathbb{Z}^n \ar@{-}[u]_{\text{finite-index}} \\
& f(\langle K, v\rangle \cap \langle L, v\rangle) \ar@{-}[lu] \ar@{-}[u] \ar@{-}[ru]& \\
& f\langle v \rangle \ar@{=}[u] \cong \mathbb{Z}&
}
\]

Let $A$ be a two-sided multi-curve on $N$. 
We denote by $\langle A\rangle$ a subgroup of ${\rm Mod}(N)$ which is generated by Dehn twists along the curves in $A$.
The proof of Theorem~\ref{n-thick_stars} is essentially the same as the proof of Kim-Koberda~\cite[Theorem 5]{Kim-Koberda16}.

Let $f$ be a standard embedding $f \colon A(\Gamma) \rightarrow \mathrm{Mod}(N)$. 
Pick a vertex $v$ in $V(\Gamma)$. 
We write $K$ and $L$ for two disjoint cliques of $\Gamma$ such that $K \cup \{v\}$ and $L \cup \{v\}$ are cliques on $n$ vertices of $\Gamma$. 
The support of $f \langle K\rangle$ is a regular neighborhood of a two-sided multi-curve in $N$, and we call $A$ for the two-sided multi-curve. 
Similarly, we write $B$ and $C$ for two-sided multi-curves in the supports of $f \langle L\rangle$ and $f \langle v\rangle$ in $N$, respectively. 
Since $\xi(N)=n$, two-sided multi-curves $A\cup C$ and $B\cup C$ are maximal. 
Note that $\langle C\rangle$ is a subgroup of $\langle A\cup C\rangle\cap\langle B\cup C\rangle$. 
The subgroup $f \langle v\rangle$ in $\mathrm{Mod}(N)$ is a finite index subgroup of $\langle C\rangle$. 
By the fact that $\langle C\rangle\cong \mathbb{Z}^{|C|}$ and $f \langle v\rangle\cong \mathbb{Z}$, it follows that $|C|=1$. 
Hence, for each $v\in V(\Gamma)$, $f(v)$ is some single Dehn twist $t_{a}$ along a two-sided curve $a$, and we obtain a full embedding of $\Gamma$ into $\mathcal{C}^{\mathrm{two}}(N)$. 
\end{proof}

\section{Appendix: an analogue of Lemma \ref{co_hom_inj} for right-angled Coxeter groups}

For a finite graph $\Gamma$, the {\it right-angled Coxeter group} $C(\Gamma)$ is defined to be the quotient of $A(\Gamma)$ by the normal closure $\langle \langle  v \in V(\Gamma) \rangle \rangle$. 
A {\it reduced word} in a right-angled Coxeter group can be defined similarly as a reduced word in a right-angled Artin group; a reduced word for an element $g$ in a right-angled Coxeter group $C(\Gamma)$ is a shortest word representing $g$ in $C(\Gamma)$. 
Tits' solution to the word problem for right-angled Coxeter groups is as follows.  

\begin{lemma}[{cf.\ \cite[Theorem 3.4.1]{Davis}}]
\label{reduced_RACG}
Let $w$ be a word in $C(\Gamma)$. 
Then $w$ is reduced if and only if $w$ contains neither a word of the form $v^{\varepsilon}xv^{- \varepsilon}$ nor a word of the form $v^{\varepsilon}xv^{\varepsilon}$, where $v$ is a vertex of $\Gamma$, $\varepsilon \in \{1, -1 \}$ and $x$ is a word such that $v$ is commutative with all of the letters in $x$. 
\end{lemma}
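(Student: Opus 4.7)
The plan is to reduce the statement to Tits' solution to the word problem for right-angled Coxeter groups, which is exactly the content of \cite[Theorem 3.4.1]{Davis}, by first collapsing the two forbidden patterns to a single one. Since each generator of $C(\Gamma)$ is an involution, we have $v^{\varepsilon} = v^{-\varepsilon}$ for every $\varepsilon \in \{1, -1\}$, so both patterns $v^{\varepsilon} x v^{-\varepsilon}$ and $v^{\varepsilon} x v^{\varepsilon}$ coincide with the single pattern $vxv$ in which $v$ commutes with every letter of $x$. This reduction is the whole reason the statement differs from Lemma \ref{reduced} for right-angled Artin groups.

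For the ``only if'' direction (in contrapositive form), suppose $w$ contains a subword $vxv$ with $v$ commuting with every letter of $x$. I would use the commutation relations to move one copy of $v$ through $x$, turning $vxv$ into $xv^{2} = x$ in $C(\Gamma)$, producing a word representing the same element as $w$ but two letters shorter. Hence $w$ is not reduced.

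For the converse, I would invoke Tits' theorem in its right-angled form: two words represent the same element of $C(\Gamma)$ if and only if one can be obtained from the other by a finite sequence of (i) swaps of adjacent commuting letters and (ii) deletions of adjacent occurrences of $vv$. In particular, $w$ is reduced if and only if no sequence of moves of type (i) can transform $w$ into a word admitting a move of type (ii). Unpacking the failure of reducedness: if some type-(i) sequence applied to $w$ creates a pair of adjacent equal letters $vv$, then tracing the two letters back to their original positions $i < j$ in $w$ shows that $a_{i} = a_{j} = v$ and every intermediate letter $a_{k}$ with $i < k < j$ must have commuted past $v$, hence commutes with $v$. This produces the forbidden subword $vxv$ with $x = a_{i+1} \cdots a_{j-1}$.

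The main obstacle is rigorously identifying, in the converse direction, the original positions in $w$ whose letters get brought together by the commutation moves; but this is precisely the combinatorial content of \cite[Theorem 3.4.1]{Davis} (and it is considerably cleaner in the right-angled case because all braid relations have length two). Once the involution observation is made, the proof therefore reduces to a direct citation of that result.
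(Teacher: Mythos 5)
Your proposal is correct and takes essentially the same route as the paper: the paper offers no proof of this lemma at all, simply stating it as a known consequence of Tits' solution to the word problem with the citation to \cite[Theorem 3.4.1]{Davis}. Your additional observations --- that the involution relation $v^{\varepsilon}=v^{-\varepsilon}$ collapses the two forbidden patterns into one, and that tracing the two cancelling letters back through the commutation swaps forces every intervening letter to commute with $v$ --- are accurate and correctly fill in the unpacking the paper leaves implicit.
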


Note that the Tits gave a solution to the word problem for general Coxeter groups. 
We now establish an analogue of Lemma \ref{co_hom_inj} for right-angled Coxeter groups. 

\begin{proposition} \label{RACG}
Let $f \colon \Lambda \rightarrow \Gamma$ be a surjective full map with condition $(*)$ in Lemma \ref{co_hom_inj}. 
Suppose that every fiber of the map $f$ forms a clique. 
Then the diagonal map $\phi \colon V(\Gamma) \rightarrow C(\Lambda)$ sending each vertex $u$ to the product $\coprod_{v \in f^{-1}(u)} v$ extends to an injective group homomorphism $\varphi \colon C(\Gamma) \hookrightarrow C(\Lambda)$. 
Moreover, $\varphi$ is a quasi-isometric embedding. 
\end{proposition}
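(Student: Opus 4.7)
The plan is to mirror the proof of Lemma \ref{co_hom_inj} line by line, using Lemma \ref{reduced_RACG} in place of Lemma \ref{reduced}, and using the new hypothesis that every fiber is a clique exactly where it is needed to make the diagonal map land in $C(\Lambda)$.

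\emph{Step 1: $\phi$ is well-defined and extends to a homomorphism $\varphi$.} Fix $u \in V(\Gamma)$. Since $f^{-1}(u)$ induces a clique in $\Lambda$, any two vertices of $f^{-1}(u)$ commute in $C(\Lambda)$, so the product $\coprod_{v \in f^{-1}(u)} v$ is independent of the chosen linear ordering and is an involution (each factor is an involution and all factors commute). Thus $\phi(u)$ is well-defined of order $\leq 2$. If $u_1, u_2$ are adjacent in $\Gamma$, then fullness of $f$ forces every vertex of $f^{-1}(u_1)$ to be adjacent to (hence commute with) every vertex of $f^{-1}(u_2)$ in $\Lambda$, so $\phi(u_1)$ and $\phi(u_2)$ commute in $C(\Lambda)$. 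Hence $\phi$ respects all defining relations of $C(\Gamma)$ and extends to a group homomorphism $\varphi \colon C(\Gamma) \to C(\Lambda)$.

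\emph{Step 2: $\varphi$ is injective.} Let $g \in \ker\varphi$ and let $w$ be a reduced word in $C(\Gamma)$ representing $g$. Suppose for contradiction that $w$ is non-empty, so $\varphi(w)$ is a non-empty word in $C(\Lambda)$ representing the identity. By Lemma \ref{reduced_RACG}, $\varphi(w)$ contains a subword of the form $v^{\varepsilon} x v^{\pm\varepsilon}$, where $v \in V(\Lambda)$ and every letter of $x$ commutes with $v$ in $C(\Lambda)$. Since $f$ is a function, $v$ lies in the unique fiber $f^{-1}(u_v)$, so both boundary occurrences of $v$ must arise from blocks of $\varphi(w)$ coming from the letter $u_v$ in $w$. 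Writing $w = \alpha \cdot u_v \cdot \bar{x} \cdot u_v \cdot \beta$ with $\bar{x}$ the intermediate subword (and remembering that in a right-angled Coxeter group $u_v^{\pm 1} = u_v$), we claim every letter of $\bar{x}$ commutes with $u_v$ in $C(\Gamma)$. Indeed, for any letter $u_j$ occurring in $\bar{x}$, its entire image $\phi(u_j)$ sits inside $x$, so every vertex of $f^{-1}(u_j)$ commutes with $v$ in $\Lambda$; if $u_j \neq u_v$ and $u_j$ were not adjacent to $u_v$ in $\Gamma$, then condition $(*)$ would supply some $v' \in f^{-1}(u_j)$ not adjacent to $v$, contradicting the above. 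Thus each such $u_j$ is either $u_v$ itself or adjacent to $u_v$, so $\bar{x}$ commutes with $u_v$ in $C(\Gamma)$. Applying Lemma \ref{reduced_RACG} in $C(\Gamma)$ shows that $w$ is not reduced, a contradiction. Hence $w$ is empty, $g = 1$, and $\varphi$ is injective.

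\emph{Step 3: $\varphi$ is a quasi-isometric embedding.} The argument of Step 2 in fact shows that $\varphi$ carries reduced words to reduced words: if $w$ is reduced in $C(\Gamma)$ but $\varphi(w)$ contained a forbidden subword $v x v^{\pm 1}$, the same back-tracking yields a forbidden subword in $w$. Since reduced words compute word length in right-angled Coxeter groups, we obtain
\[
|g|_{C(\Gamma)} \;\leq\; |\varphi(g)|_{C(\Lambda)} \;\leq\; K\cdot|g|_{C(\Gamma)},
\]
where $K = \max\{\# f^{-1}(u) \mid u \in V(\Gamma)\}$; the lower bound is because each letter of a reduced $w$ contributes at least one letter to $\varphi(w)$, and the upper bound is because it contributes at most $K$ letters.

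The main obstacle is Step 2: Tits' solution to the word problem for right-angled Coxeter groups allows cancellation patterns of the form $v^{\varepsilon} x v^{\varepsilon}$ in addition to $v^{\varepsilon} x v^{-\varepsilon}$, so one must verify that the back-tracking argument from the proof of Lemma \ref{co_hom_inj} still applies. This is where the hypothesis that each fiber $f^{-1}(u)$ forms a clique is essential; without it, $\phi(u)$ would not even be an involution and the diagonal homomorphism could not be defined in the first place.
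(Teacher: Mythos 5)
Your proof is correct and follows essentially the same route as the paper: both extend $\phi$ using the clique hypothesis to get involutions, then run the back-tracking argument from Lemma \ref{co_hom_inj} with Tits' criterion (Lemma \ref{reduced_RACG}) in place of Lemma \ref{reduced}, handling the extra cancellation pattern $v^{\varepsilon}xv^{\varepsilon}$ identically, and extract the bi-Lipschitz bound from the fact that reduced words map to reduced words. No substantive differences.
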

\begin{proof}[Proof of Proposition \ref{RACG}]
We first prove that $\phi$ extends to a group homomorphism $\varphi$. 
Pick a vertex $v$ of $\Gamma$. 
Since $f(v)$ consists of mutually adjacent vertices, $\phi(v)^2=1$ in $C(\Lambda)$. 
We can check the commutativity of the images of adjacent vertices as in the proof of Lemma \ref{co_hom_inj}. 
Hence, $\phi$ extends to a group homomorphism $\varphi$. 

We next prove that $\varphi$ is injective. 
To see this, we claim that $\varphi$ maps each reduced word in $C(\Gamma)$ to a reduced word in $C(\Lambda)$. 
Pick a non-trivial element $g$ of $C(\Gamma)$ and choose a reduced representation $w$ of $g$. 
Since $\varphi$ is a group homomorphism, we can regard $\varphi(w)$ as a word representation of $\varphi(g)$. 
Suppose, on the contrary, that $\varphi(w)$ is not reduced. 
Then according to Lemma \ref{reduced_RACG}, a cancelation must occur in $\varphi(w)$ and there exists a subword of the form $v^{\varepsilon} x v^{- \varepsilon}$ or $v^{\varepsilon} x v^{\varepsilon}$ such that $v$ is a vertex of $\Lambda$ and $x$ is a word in $C(\Lambda)$ consisting of vertices commutative with $v$, and $\varepsilon \in \{ 1, -1 \}$. 
The prefix and suffix of $v^{\varepsilon} x v^{- \varepsilon}$ come from the identical vertex $u_{v}$ in $\Gamma$, namely, $v \in f^{-1}(u_{v})$. 
This implies that $w$ contains a subword of the form $u_v^{\varepsilon} \bar{x} u_v^{- \varepsilon}$ or $u_v^{\varepsilon} \bar{x} u_v^{\varepsilon}$, where $\bar{x}$ is a word in $C(\Gamma)$ and $\varphi(\bar{x})$ is a subword of $x$. 
Since $\varphi(\bar{x})$ is a subword of $x$ and since $f$ satisfies the property $(*)$, every letter  contained in $\bar{x}$ is adjacent to $u_v$ in $\Lambda$. 
This implies that a cancelation $u_v^{\varepsilon} \bar{x} u_v^{- \varepsilon} \equiv \bar{x}$ or $u_v^{\varepsilon} \bar{x} u_v^{\varepsilon} \equiv \bar{x}$ occurs in $w$, and therefore $w$ is not reduced.  This is a contradiction, and hence $\varphi$ maps each reduced word to a reduced word. 
Thus, we obtain the result that $\varphi$ maps each non-trivial element to a non-trivial element as well as the result that $\varphi$ is a bi-Lipschitz map with a Lipschitz constant $\mathrm{max} \{ \# f^{-1}(v) \mid v \in V(\Lambda) \}$. 
\end{proof}

As seen in Remark \ref{kapovich_rem}, the complementary map of a surjective and locally surjective graph morphism is a surjective full map with condition $(*)$ each of whose fibers consists of mutually adjacent vertices. 
So we have the following. 

\begin{corollary}
Let $\Lambda^c \rightarrow \Gamma^c$ be a surjective and locally surjective graph morphism. 
Then $C(\Gamma)$ is quasi-isometrically embedded in $C(\Lambda)$. 
\end{corollary}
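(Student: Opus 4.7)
The plan is to apply Proposition~\ref{RACG} to the complementary map $g^c \colon \Lambda \to \Gamma$ of the given graph morphism $g \colon \Lambda^c \to \Gamma^c$. The task is therefore to verify each of the three hypotheses of that proposition for $g^c$: surjectivity, being a full map with condition $(*)$, and having fibers that are cliques in the domain.

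First I would dispatch the easier conditions by appealing to Remark~\ref{kapovich_rem}. Surjectivity of $g^c$ is immediate because $g$ and $g^c$ coincide as maps of vertex sets. The remark records that the complementary map of a graph morphism is full, so $g^c$ is full; it also records that local surjectivity of $g$ is equivalent to condition $(*)$ for $g^c$. Hence $g^c$ is a surjective full map satisfying condition $(*)$.

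Next I would check that each fiber of $g^c$ forms a clique in $\Lambda$. Pick distinct $v_1, v_2 \in (g^c)^{-1}(u) = g^{-1}(u)$. If $\{v_1, v_2\}$ were an edge of $\Lambda^c$, then the graph-morphism property of $g$ would force $\{g(v_1), g(v_2)\} = \{u,u\}$ to be an edge of $\Gamma^c$; but $\Gamma^c$ is a simple graph with no loops, so this is impossible. Thus $v_1$ and $v_2$ are non-adjacent in $\Lambda^c$, equivalently adjacent in $\Lambda$, and the fiber is a clique in $\Lambda$.

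With all the hypotheses in place, Proposition~\ref{RACG} applied to $g^c$ yields the desired injective quasi-isometric group homomorphism $C(\Gamma) \hookrightarrow C(\Lambda)$. The only real delicacy here is the bookkeeping required to keep the single and double complements straight; no new ideas beyond those already packaged into Remark~\ref{kapovich_rem} and Proposition~\ref{RACG} are needed.
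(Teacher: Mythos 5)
Your proposal is correct and follows exactly the paper's route: the paper likewise obtains the corollary by applying Proposition~\ref{RACG} to the complementary map, citing Remark~\ref{kapovich_rem} for surjectivity, fullness, condition $(*)$, and the clique-fiber property. Your explicit check that fibers are cliques (no loops in the simple graph $\Gamma^c$ forces fibers of the graph morphism to be independent sets in $\Lambda^c$, hence cliques in $\Lambda$) is just a spelled-out version of what the paper asserts in that remark.
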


The concept of a ``planar emulator" is an example of surjective and locally surjective graph homomorphism; see \cite{Kapovich}. 

{\bf Acknowledgements:} 
The authors are grateful to Anthony Genevois, Sang-hyun Kim and Donggyun Seo for helpful comments on Section \ref{Proof_of_theorem_condition_raag_in_nonorimcg} and the appendix. 
The authors also thank to Naoto Shida for drawing figures. 
The first author was supported by JSPS KAKENHI, the grant number 20J1431, and the second author was supported by JST, ACT-X, the grant number JPMJAX200D, Japan, and partially supported by JSPS KAKENHI Grant-in-Aid for Young Scientists, Grant Number 21K13791.

\end{document}